\newtheorem{thm}{Theorem}[section]
\newtheorem{prop}[thm]{Proposition}
\newtheorem{lem}[thm]{Lemma}
\newtheorem{cor}[thm]{Corollary}
\newtheorem{definition}[thm]{Definition}
\begin{document}

\title{Covering certain wreath products with proper subgroups}
\author{Martino Garonzi and Attila Mar\'oti}
\thanks{The research of the second author was supported by a Marie
Curie International Reintegration Grant within the 7th European
Community Framework Programme and partially by grants OTKA T049841
and OTKA NK72523.}
\date{14th of March, 2010}
\maketitle

\begin{abstract}
For a non-cyclic finite group $X$ let $\sigma(X)$ be the least
number of proper subgroups of $X$ whose union is $X$. Precise
formulas or estimates are given for $\sigma(S \wr C_{m})$ for
certain nonabelian finite simple groups $S$ where $C_m$ is a
cyclic group of order $m$.
\end{abstract}

\section{Introduction}

For a non-cyclic finite group $X$ let $\sigma(X)$ be the least
number of proper subgroups of $X$ whose union is $X$. Let $S$ be a
nonabelian finite simple group, let $\Sigma$ be a nonempty subset
of $S$, and let $m$ be a positive integer. Let $\alpha(m)$ be the
number of distinct prime divisors of $m$. Let $\mathcal{M}$ be a
nonempty set of maximal subgroups of $S$ with the following
properties (provided that such an $\mathcal{M}$ exists).
\begin{enumerate}
\item[(0)] If $M \in \mathcal{M}$ then $M^{s} \in \mathcal{M}$ for
any $s \in S$;

\item[(1)] $\Sigma \cap M \neq \emptyset$ for every $M \in
\mathcal{M}$;

\item[(2)] $\Sigma \subseteq \bigcup_{M \in \mathcal{M}} M$;

\item[(3)] $\Sigma \cap M_{1} \cap M_{2} = \emptyset$ for every
distinct pair of subgroups $M_{1}$ and $M_{2}$ of $\mathcal{M}$;

\item[(4)] $\mathcal{M}$ contains at least two subgroups that are
not conjugate in $S$;

\item[(5)] $m \geq 2$ and
$$\max \Big\{ (1+\alpha(m)){|S|}^{m/\ell}, \underset{H <
S}{\underset{H \not\in \mathcal{M}}{\max}} |\Sigma \cap
H|{|H|}^{m-1} \Big\} \leq$$
$$\leq \min \Big\{ \Big( \sum |\Sigma \cap M_{1}| |\Sigma \cap M_{2}|
\Big) {|S|}^{m-2}, \underset{M \in \mathcal{M}}{\min} |\Sigma \cap
M|{|M|}^{m-1} \Big\}$$ where $\ell$ is the smallest prime divisor
of $m$ and the sum is over all pairs $(M_{1},M_{2}) \in
{\mathcal{M}}^{2}$ with $M_{1}$ not conjugate to $M_{2}$.
\end{enumerate}

Let $\mathcal{N}$ denote a covering for $S$, that is, a set of
proper subgroups of $S$ whose union is $S$.

\begin{thm}
\label{main1} Using the notations and assumptions introduced above
we have
$$\alpha(m) + \sum_{M \in \mathcal{M}} {|S:M|}^{m-1} \leq
\sigma(S \wr C_{m}) \leq \alpha(m) + \min_{\mathcal{N}} \sum_{M
\in \mathcal{N}} {|S:M|}^{m-1}.$$
\end{thm}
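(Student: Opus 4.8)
Write $W=S\wr C_{m}=S^{m}\rtimes\langle\tau\rangle$, where $\tau$ has order $m$ and cyclically permutes the $m$ direct factors. The cosets of $S^{m}$ in $W$ split into the \emph{generator cosets} $S^{m}\tau^{k}$ with $\gcd(k,m)=1$ and the rest, whose union I call $N_{0}$ (the preimage of the non-generators of $W/S^{m}\cong C_{m}$). The two summands in the statement arise from these two parts separately: $\alpha(m)$ from $N_{0}$, the subgroup–sums from the generator cosets. The organizing fact for a generator coset is that, for $g=\mathbf{s}\tau^{k}$ with $\gcd(k,m)=1$, the element $g^{m}$ lies in $S^{m}$ and is $W$-conjugate to a diagonal element $(w,\dots,w)$, where the $S$-conjugacy class of $w$ — the ``cycle product'' of the coordinates of $\mathbf{s}$ taken along the $k$-cycle — is a complete invariant of the $W$-conjugacy class of $g$ within the coset; in particular $g$ lies in a conjugate of $N\wr C_{m}$ exactly when $w$ is $S$-conjugate into $N$.

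\emph{Upper bound.} The $\alpha(m)$ maximal subgroups $W_{p}:=S^{m}\rtimes\langle\tau^{p}\rangle$, one for each prime $p\mid m$, have prime index and their union is exactly $N_{0}$. Now fix a cover $\mathcal{N}$ of $S$ realizing the minimum. Since $\bigcup_{N\in\mathcal{N}}N=S$, it is enough to cover, for each $N\in\mathcal{N}$, the set of generator-coset elements whose cycle product lies in $N$; this set has $\varphi(m)\,|N|\,|S|^{m-1}$ elements (here $\varphi(m)$ is the number of generators of $C_{m}$), while a conjugate of $N\wr C_{m}$ contains $\varphi(m)\,|N|^{m}$ elements of the generator cosets. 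One then shows that a suitable choice of $|S:N|^{m-1}$ conjugates of $N\wr C_{m}$ covers precisely this set — an ``efficient covering'' statement proved by a transversal/counting argument using the conjugacy invariant above. Adjoining the $W_{p}$ produces a cover of $W$ by $\alpha(m)+\sum_{N\in\mathcal{N}}|S:N|^{m-1}$ proper subgroups.

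\emph{Lower bound.} Let $\mathcal{C}$ be any cover of $W$ by proper subgroups, and split $\mathcal{C}=\mathcal{A}\sqcup\mathcal{B}$, where $\mathcal{B}$ is the set of members mapping onto $C_{m}$ (equivalently, meeting a generator coset) and $\mathcal{A}$ the rest; each $A\in\mathcal{A}$ lies in some $W_{p}$, and each $B\in\mathcal{B}$ meets every generator coset in a coset of $B\cap S^{m}$, which is a proper subgroup of $S^{m}$ invariant under conjugation by some element of $B$ lying above a generator of $C_{m}$. A first, more elementary step — using that for each prime $p\mid m$ the coset $S^{m}\tau^{p}$ is reached only by members lying in $W_{p}$, and that the left-hand term $(1+\alpha(m))|S|^{m/\ell}$ of $(5)$ (with $\ell$ the least prime divisor of $m$) makes covering any part of $N_{0}$ by $\mathcal{B}$-members more expensive than keeping $\alpha(m)$ cheap members — shows one may assume $|\mathcal{A}|\ge\alpha(m)$. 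The heart of the matter is then $|\mathcal{B}|\ge\sum_{M\in\mathcal{M}}|S:M|^{m-1}$. Let $T$ be the set of elements of the generator cosets with cycle product in $\Sigma$; by $(1)$–$(3)$ the sets $\Sigma\cap M$ ($M\in\mathcal{M}$) partition $\Sigma$, so $|T|=\varphi(m)\,|\Sigma|\,|S|^{m-1}$. Each $B\in\mathcal{B}$ meets $T$ in pieces governed by the $S$-subgroup controlling its cycle products: if that subgroup is $M\in\mathcal{M}$ then $|B\cap T|\le\varphi(m)\,|\Sigma\cap M|\,|M|^{m-1}$, whereas if it is some $H<S$ with $H\notin\mathcal{M}$ then $(5)$ forces $|\Sigma\cap H|\,|H|^{m-1}$ to be dominated by the quantities attached to $\mathcal{M}$, so such $B$ can never substitute efficiently for an $\mathcal{M}$-type member. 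A double counting on $T$ — using $(3)$ so that the contributions attached to different $M$ do not interfere, the term $\big(\sum|\Sigma\cap M_{1}|\,|\Sigma\cap M_{2}|\big)|S|^{m-2}$ of $(5)$ to bound the pairwise intersections of $\mathcal{B}$-members of different conjugacy type, and $(4)$ to exclude the cover consisting of translates of a single conjugacy class — yields $|\mathcal{B}|\ge\sum_{M\in\mathcal{M}}|S:M|^{m-1}$. Adding the two contributions gives the lower bound.

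\emph{Where the difficulty lies.} The delicate point is the bound $|\mathcal{B}|\ge\sum_{M\in\mathcal{M}}|S:M|^{m-1}$. A naive element count on $T$ does not suffice, because a member of $\mathcal{B}$ could be a subgroup of $W$ that captures marked elements from several generator cosets simultaneously, or that straddles several conjugacy types; the entire point of the intricate inequality $(5)$ is to bound — uniformly over every possible type of $\mathcal{B}$-member — both how much of $T$ a single member can cover and how large the pairwise overlaps of like-typed members can be, so that the double-counting estimate reaches exactly $\sum_{M\in\mathcal{M}}|S:M|^{m-1}$, the non-conjugacy hypothesis $(4)$ being indispensable for the overlap bound.
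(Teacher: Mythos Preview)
Your upper bound is the paper's argument: the $\alpha(m)$ maximal subgroups containing the socle cover $N_{0}$, and then for each $M$ in a minimal cover $\mathcal{N}$ of $S$ the $|S:M|^{m-1}$ product-type normalizers $N_{G}(M\times M^{g_{2}}\times\cdots\times M^{g_{m}})$ pick up every generator-coset element whose cycle product lies in $M$. The paper carries this out explicitly via Lemma~\ref{lb}.

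The lower bound, however, does not go through as you have it. The split $\mathcal{C}=\mathcal{A}\sqcup\mathcal{B}$ does \emph{not} give $|\mathcal{A}|\ge\alpha(m)$: every product-type and every diagonal-type maximal subgroup surjects onto $C_{m}$, so a cover may perfectly well have $\mathcal{A}=\emptyset$. Your ``one may assume $|\mathcal{A}|\ge\alpha(m)$'' by cost comparison is a heuristic about optimal covers, not a lower bound on $|\mathcal{C}|$, and cannot be made rigorous without producing a specific set of elements that forces the $\alpha(m)$ contribution. You have also misread condition~(5): the term $\big(\sum|\Sigma\cap M_{1}||\Sigma\cap M_{2}|\big)|S|^{m-2}$ sits inside the $\min$ on the right, so it is a \emph{lower} bound, not a bound on overlaps of $\mathcal{B}$-members; in the paper it is exactly $|\Pi\cap H|$ for each maximal $H$ containing the socle. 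Similarly $(1+\alpha(m))|S|^{m/\ell}$ is the upper bound on $|\Pi\cap K|$ for diagonal-type $K$, not a device for the $N_{0}$ step.

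The paper avoids the split altogether by invoking the \emph{definite unbeatability} framework (Definition~\ref{d1}, Lemma~\ref{l6}): one exhibits $\Pi\subseteq G$ and a family $\mathcal{H}$ of proper subgroups so that the sets $\Pi\cap H$ ($H\in\mathcal{H}$) partition $\Pi$ and every proper $K\notin\mathcal{H}$ satisfies $|\Pi\cap K|\le\min_{H\in\mathcal{H}}|\Pi\cap H|$; then $|\mathcal{H}|\le\sigma(G)$ by a one-line counting argument. The crucial construction you are missing is $\Pi_{2}$: elements $(x_{1},\ldots,x_{m})\gamma^{r}$ with $r$ a prime divisor of $m$ whose cycle products $x_{1}x_{r+1}\cdots$ and $x_{2}x_{r+2}\cdots$ land in $\Sigma_{M}$ and $\Sigma_{K}$ for \emph{non-conjugate} $M,K\in\mathcal{M}$ (this is precisely where hypothesis~(4) is used). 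By Lemma~\ref{lb} no product-type subgroup in $\mathcal{H}_{1}$ can contain such an element (both cycle products would lie in conjugates of a single $M$), while no subgroup containing the socle meets $\Pi_{1}$; this orthogonality is what makes the $\alpha(m)$ and the $\sum_{M}|S:M|^{m-1}$ contributions genuinely additive. Condition~(5) is then nothing more than the case-by-case verification of the inequality $|\Pi\cap K|\le\min_{H\in\mathcal{H}}|\Pi\cap H|$ across the three types of maximal subgroups of $G$ (Lemma~\ref{folklore}).
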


We state and prove two direct consequences of Theorem \ref{main1}.
Recall that $M_{11}$ is the Mathieu group of degree $11$.

\begin{cor}
\label{c1}
For every positive integer $m$ we have $$\sigma(M_{11}
\wr C_{m}) = \alpha(m) + 11^{m} + 12^{m}.$$
\end{cor}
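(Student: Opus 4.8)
The plan is to deduce the corollary from Theorem~\ref{main1}. Up to conjugacy the maximal subgroups of $M_{11}$ are the point stabilizer $M_{10}$ of the $4$-transitive action on $11$ points (index $11$), the point stabilizer $\PSL_2(11)$ of the $3$-transitive action on $12$ points (index $12$), and conjugates of $3^2{:}Q_8.2$ (index $55$), $S_5$ (index $66$) and $2.S_4$ (index $165$). Let $\mathcal{N}_0$ be the set of the eleven conjugates of $M_{10}$ together with the twelve conjugates of $\PSL_2(11)$. This is a covering of $M_{11}$: from the permutation characters of the two natural actions one sees that elements of orders $6$ and $11$ fix a point in the degree-$12$ action while all other elements fix a point in the degree-$11$ action. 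Hence $\sum_{M\in\mathcal{N}_0}|M_{11}:M|^{m-1}=11\cdot 11^{m-1}+12\cdot 12^{m-1}=11^m+12^m$. For $m=1$ this recovers the known equality $\sigma(M_{11})=23=\alpha(1)+11+12$, so from now on assume $m\ge 2$.

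For $m\ge 2$ I would apply Theorem~\ref{main1} with $\mathcal{M}=\mathcal{N}_0$ and with $\Sigma$ the set of all elements of $M_{11}$ of order $8$ or $11$. Conditions~(0) and~(4) are immediate, as $\mathcal{M}$ is conjugation-closed and $M_{10}$, $\PSL_2(11)$ are non-conjugate. An element of order $8$ fixes exactly one point of the $11$-point action, so it lies in exactly one conjugate of $M_{10}$, and in no conjugate of $\PSL_2(11)$ since that group has no element of order $8$. An element of order $11$ generates a Sylow $11$-subgroup of $M_{11}$; since $M_{11}$ has $144$ Sylow $11$-subgroups while each conjugate of $\PSL_2(11)$ contains exactly $12$ of them (and $12\cdot 12=144$), such an element lies in exactly one conjugate of $\PSL_2(11)$ and in no conjugate of $M_{10}$, as $11\nmid|M_{10}|$. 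Since $M_{10}$ has elements of order $8$ and $\PSL_2(11)$ has elements of order $11$, conditions~(1)--(3) hold.

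The real work is condition~(5). The $1980$ elements of order $8$ of $M_{11}$ each lie in a unique conjugate of $M_{10}$, so by symmetry $|\Sigma\cap M_{10}|=1980/11=180$; and $|\Sigma\cap\PSL_2(11)|=120$, its elements of order $11$. With $|M_{11}|=7920$, $|M_{10}|=720$, $|\PSL_2(11)|=660$, the right-hand side of~(5) equals $\min\{264\cdot 180\cdot 120\cdot 7920^{m-2},\,180\cdot 720^{m-1},\,120\cdot 660^{m-1}\}=120\cdot 660^{m-1}$. For the left-hand side, $(1+\alpha(m))\,7920^{m/\ell}\le(1+\alpha(m))\,7920^{m/2}$ since $\ell\ge 2$, and this is at most $120\cdot 660^{m-1}$, which one verifies directly for $m=2,3$ and, for $m\ge 4$, by comparing growth rates ($660^{m-1}/7920^{m/2}$ is exponentially large while $1+\alpha(m)=O(\log m)$). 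The term $\max_{H<M_{11},\,H\notin\mathcal{M}}|\Sigma\cap H|\,|H|^{m-1}$ requires a short case analysis over \emph{all} proper subgroups not in $\mathcal{M}$: if $H$ lies in a maximal subgroup outside $\mathcal{M}$ then $|H|\le 144$; otherwise $H$ is a proper subgroup of a conjugate of $M_{10}$ or of $\PSL_2(11)$, and then either $|H|\le 144$, or $H$ is the index-$2$ subgroup $A_6$ of $M_{10}$ (which has no element of order $8$), or $H$ lies in a Borel subgroup $11{:}5$ of $\PSL_2(11)$ (of order at most $55$, with only $10$ elements of order $11$); in every case $|\Sigma\cap H|\,|H|^{m-1}<144^m\le 120\cdot 660^{m-1}$ for $m\ge 2$. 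Hence~(5) holds, so Theorem~\ref{main1} gives $\alpha(m)+11^m+12^m=\alpha(m)+\sum_{M\in\mathcal{M}}|M_{11}:M|^{m-1}\le\sigma(M_{11}\wr C_m)\le\alpha(m)+\min_{\mathcal{N}}\sum_{M\in\mathcal{N}}|M_{11}:M|^{m-1}\le\alpha(m)+11^m+12^m$, the last step using the covering $\mathcal{N}_0$; equality follows. The main obstacle is precisely this verification of~(5): fixing the intersection numbers, excluding subgroups of intermediate order inside $M_{10}$ and $\PSL_2(11)$, and dealing with $m=2,3$ where the exponential estimates are not yet automatic.
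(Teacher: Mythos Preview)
Your proof is correct and follows essentially the same approach as the paper: apply Theorem~\ref{main1} with $\Sigma$ the set of elements of order $8$ or $11$ and $\mathcal{M}$ the conjugates of $M_{10}$ together with those of $\PSL_2(11)$, and use this same $\mathcal{M}$ as the covering $\mathcal{N}_0$ for the upper bound. The paper records the exact value $\max_{H\notin\mathcal{M}}|\Sigma\cap H|\,|H|^{m-1}=36\cdot 144^{m-1}$ (attained at $M_9{:}2$) directly from \cite{GAP}, whereas you argue by a short subgroup analysis that this maximum is below $144^m$; both routes land on the same reduction $(1+\alpha(m))\,7920^{m/2}\le 120\cdot 660^{m-1}$, which you and the paper then verify.
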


Let $PSL(n,q)$ denote the projective special linear group of
dimension $n$ over a field of order $q$.

\begin{cor}
\label{c2} Let $p$ be a prime at least $11$ and $m$ be a positive
integer with smallest prime divisor at least $5$. Then
$$\sigma(PSL(2,p)
\wr C_{m}) = \alpha(m) + {(p+1)}^{m} + {(p(p-1)/2)}^{m}.$$
\end{cor}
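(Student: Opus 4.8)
The plan is to derive both inequalities of Corollary~\ref{c2} from Theorem~\ref{main1}, applied with $S=PSL(2,p)$ and with $\mathcal{M}$ the union of the conjugacy class of Borel subgroups $B$ (the point stabilizers on the projective line, of order $p(p-1)/2$ and index $p+1$) and the conjugacy class of the dihedral subgroups $D_{p+1}$ of order $p+1$ (of index $p(p-1)/2$); for this choice $\sum_{M\in\mathcal{M}}|S:M|^{m-1}=(p+1)\cdot(p+1)^{m-1}+\tfrac{p(p-1)}{2}\cdot\bigl(\tfrac{p(p-1)}{2}\bigr)^{m-1}=(p+1)^m+\bigl(\tfrac{p(p-1)}{2}\bigr)^m$, so if $\mathcal{M}$ verifies conditions (0)--(5) and is simultaneously a minimal covering of $S$, then Theorem~\ref{main1} pins $\sigma(PSL(2,p)\wr C_m)$ to exactly $\alpha(m)+(p+1)^m+(p(p-1)/2)^m$. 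First I would invoke Dickson's classification of the subgroups of $PSL(2,p)$ for $p\ge 11$: the maximal subgroups are $B$, the dihedral groups $D_{p-1}$ and $D_{p+1}$, and, for appropriate residues of $p$, copies of $A_4$, $S_4$, $A_5$; every unipotent element lies in a unique $B$; a semisimple element whose order divides $(p-1)/2$ lies in exactly two Borels; a semisimple element of order dividing $(p+1)/2$ but not $2$ lies in a unique $D_{p+1}$, and in no other maximal subgroup once its order exceeds $5$. From this, $\mathcal{M}$ is a covering of $S$; moreover every covering of $S$ must contain all $p+1$ Borels (the cheapest overgroups of the unipotent elements, which split into one independent family per Borel) and all $p(p-1)/2$ copies of $D_{p+1}$ (essentially the only overgroups of the elements of order $(p+1)/2\ge 6$), so $\mathcal{M}$ is the unique inclusion-minimal covering of $S$; hence $\min_{\mathcal{N}}\sum_{M\in\mathcal{N}}|S:M|^{m-1}=(p+1)^m+(p(p-1)/2)^m$, which already yields the upper bound (and, for $m=1$, the full statement $\sigma(PSL(2,p))=(p+1)+p(p-1)/2$).

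For the lower bound I must produce a set $\Sigma$ making $\mathcal{M}$ satisfy (0)--(5). Conditions (0) and (4) are free, since $\mathcal{M}$ is a union of conjugacy classes of subgroups and Borels are not conjugate to dihedral subgroups. I would take $\Sigma$ to be the union of the set of all unipotent elements of $S$ with the set of all elements of order exactly $(p+1)/2$. Then (2) is immediate; (1) holds because each Borel contains unipotent elements and each $D_{p+1}$ contains elements of order $(p+1)/2$; and (3) holds because two distinct Borels meet only in semisimple elements (none of which is in $\Sigma$), two distinct copies of $D_{p+1}$ meet only in elements of order dividing $2$ (and $(p+1)/2>2$), and a Borel meets a copy of $D_{p+1}$ only in semisimple elements outside $\Sigma$. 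The point enabled by $p\ge 11$ is that $(p+1)/2\ge 6>5$, so the elements of order $(p+1)/2$ avoid every copy of $A_4$, $S_4$, $A_5$, and no element of $\Sigma$ lies in a $D_{p-1}$; thus $\Sigma$ is disjoint from every maximal subgroup of $S$ outside $\mathcal{M}$.

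What remains — and this is the crux — is to verify inequality (5). The quantities on both sides become explicit: $|\Sigma\cap B|=p-1$, $|\Sigma\cap D_{p+1}|=\varphi((p+1)/2)\ge 2$, the only non-conjugate pairs in $\mathcal{M}$ are Borel/$D_{p+1}$ pairs, $\min_{M\in\mathcal{M}}|\Sigma\cap M||M|^{m-1}=\varphi((p+1)/2)\,(p+1)^{m-1}$, and the sum term times $|S|^{m-2}$ is much larger than this for $m\ge 2$. On the left, $|\Sigma\cap H||H|^{m-1}$ is small for the proper subgroups $H\notin\mathcal{M}$ — zero on the remaining maximal subgroups, and controlled on the rest — while $(1+\alpha(m))|S|^{m/\ell}\le(1+\alpha(m))|S|^{m/5}$ since $\ell\ge 5$. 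The hard part will be the resulting numerical comparison, checking $(1+\alpha(m))|S|^{m/5}\le\varphi((p+1)/2)\,(p+1)^{m-1}$, where both hypotheses must be used together ($p\ge 11$ giving $|S|<p^3$ together with $\varphi((p+1)/2)\ge 2$, and the smallest prime divisor of $m$ being $\ge 5$ giving the exponent gap $m-1>\tfrac{3m}{5}$ for $m\ge 3$), together with a uniform bound on $|\Sigma\cap H||H|^{m-1}$ over the non-maximal subgroups $H$. Once (5) is confirmed, Theorem~\ref{main1} gives $\alpha(m)+(p+1)^m+(p(p-1)/2)^m\le\sigma(PSL(2,p)\wr C_m)\le\alpha(m)+(p+1)^m+(p(p-1)/2)^m$, completing the proof.
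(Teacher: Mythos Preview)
Your approach is essentially the paper's: the same $\mathcal{M}$ (Borels together with the copies of $D_{p+1}$), the same $\Sigma$ (unipotents together with elements of exact order $(p+1)/2$; note that for $p\ge 11$ every element of order $(p+1)/2$ is automatically irreducible, so this coincides with the paper's $\Sigma_2$), the same appeal to Dickson, and the same reduction of (5) to the single inequality $(1+\alpha(m))\,|S|^{m/5}\le\varphi((p+1)/2)\,(p+1)^{m-1}$, which the paper finishes by setting $m=5$ and checking $2\sqrt{2}\,p(p^2-1)\le(p+1)^4$.

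One point deserves correction. You flag as remaining work ``a uniform bound on $|\Sigma\cap H|\,|H|^{m-1}$ over the non-maximal subgroups $H$''. In fact this bound is \emph{not} needed, and it is just as well, because it fails: for $H=C_p$ (the Sylow $p$-subgroup) one has $|\Sigma\cap H|\,|H|^{m-1}=(p-1)p^{m-1}$, which for $p=11$, $m=5$ exceeds $\varphi(6)\cdot 12^{4}$. The max in condition~(5) is effectively only over \emph{maximal} subgroups $H\notin\mathcal{M}$: in the proof of Theorem~\ref{main1}, definite unbeatability is tested against arbitrary proper $K<G$, but any such $K$ sits inside a maximal subgroup, and by Lemma~\ref{folklore} the relevant product-type maximal subgroups correspond to \emph{maximal} $M<S$. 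Since $\Sigma$ meets no maximal subgroup outside $\mathcal{M}$ (your argument that elements of order $(p+1)/2\ge 6$ avoid $A_4$, $S_4$, $A_5$, $D_{p-1}$ is exactly what is required), that term is simply $0$, as in the paper.
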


The ideas of the proof of Theorem \ref{main1} together with the
ideas in \cite{BEGHM} can be used to find a formula for
$\sigma(PSL(n,q) \wr C_{m})$ holding for several infinite series
of groups $PSL(n,q) \wr C_{m}$ for $n \geq 12$. However, since
such an investigation would be quite lengthy, we do not pursue it
in this paper.

Let $A_n$ be the alternating group of degree $n$ where $n$ is at
least $5$. The ideas of the proof of Theorem \ref{main1} together
with the ideas in \cite{Ma2} can be used to find a formula and
some estimates for $\sigma(A_{n} \wr C_{m})$ in various cases.


\begin{thm}
\label{main2} Let us use the notations and assumptions introduced
above. Let $n$ be larger than $12$. If $n$ is congruent to $2$
modulo $4$ then
$$\sigma(A_{n} \wr C_{m}) = \alpha(m) +
\overset{(n/2)-2}{\underset{i \;
\mathrm{odd}}{\underset{i=1}{\sum}}} {{n \choose i}}^{m} +
\frac{1}{2^{m}} {{n \choose n/2}}^{m}.$$ Otherwise, if $n$ is not
congruent to $2$ modulo $4$, then
$$\alpha(m) + \frac{1}{2} \overset{n}{\underset{i \;
\mathrm{odd}}{\underset{i=1}{\sum}}} {{n \choose i}}^{m} \leq
\sigma(A_{n} \wr C_{m}).$$
\end{thm}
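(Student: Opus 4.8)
The plan is to obtain Theorem~\ref{main2} from Theorem~\ref{main1} with $S=A_n$, choosing the set $\mathcal{M}$ of maximal subgroups and the set $\Sigma$ by hand, exactly as in the proofs of Corollaries~\ref{c1} and~\ref{c2}. Suppose first that $n\equiv 2\pmod 4$, so that $n/2$ is odd. I would take $\mathcal{M}$ to be the union of the conjugacy classes of the intransitive maximal subgroups $\Stab(\Delta)$ with $|\Delta|$ odd and $1\le|\Delta|\le(n/2)-2$, together with the conjugacy class of the imprimitive maximal subgroups stabilising an unordered partition $\{\Delta,\Delta^c\}$ with $|\Delta|=n/2$. (One passes to the pair stabiliser at $|\Delta|=n/2$ because $\Stab(\Delta)$ itself is not maximal there; an odd-size stabiliser with $|\Delta|>n/2$ already occurs, as the stabiliser of its complement.) A count shows that $\mathcal{M}$ has $\binom{n}{i}$ subgroups of index $\binom{n}{i}$ for each odd $i\le(n/2)-2$ and $\tfrac12\binom{n}{n/2}$ subgroups of index $\tfrac12\binom{n}{n/2}$, so $\sum_{M\in\mathcal{M}}|A_n:M|^{m-1}$ is exactly the displayed main term. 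As $\Sigma$ I would take the set of $\pi\in A_n$ lying in exactly one member of $\mathcal{M}$. Then (0), (2) and (3) hold by construction, (4) holds because $n>12$ forces the three non-conjugate types $1$, $3$ and $n/2$ into $\mathcal{M}$, and (1) follows by exhibiting a witness in each member of $\mathcal{M}$: an $i$-cycle times an $(n-i)$-cycle for the type-$i$ stabiliser, and an $(n/2)$-cycle on each half for the pair stabiliser; both permutations are even since $n$ is even, and since $i$, $n-i$ and $n/2$ are odd (so that squaring does not split these cycles) each of them stabilises no further subset relevant to $\mathcal{M}$.

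The point that upgrades the two estimates of Theorem~\ref{main1} to an equality is that this $\mathcal{M}$ is itself a covering of $A_n$. I would prove this by a cycle-type case analysis: any $\pi\in A_n$ with a cycle of odd length at most $(n/2)-2$ lies in an intransitive member of $\mathcal{M}$; if instead every cycle of $\pi$ has even length, or $\pi$ is a product of two $(n/2)$-cycles, then $\pi$ has a set $\Delta$ of size $n/2$ with $\pi(\Delta)\in\{\Delta,\Delta^c\}$, so $\pi$ lies in a pair stabiliser of $\mathcal{M}$; and the residual case, in which $\pi$ has an odd cycle of length $\ge n/2$ but nothing of odd length $\le(n/2)-2$, is impossible by parity once $n/2$ is odd. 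With $\mathcal{M}$ a covering, taking $\mathcal{N}=\mathcal{M}$ in the upper bound of Theorem~\ref{main1} makes it coincide with the lower bound, so all inequalities become equalities and the formula follows after the count above. For $n\not\equiv 2\pmod 4$ only the lower bound is asserted, so here I would run Theorem~\ref{main1} with $\Sigma$ chosen by the same recipe and with $\mathcal{M}$ taken to be: all stabilisers of odd-size subsets when $n\equiv 0\pmod 4$, for which $\sum_M|A_n:M|^{m-1}=\tfrac12\sum_{i\text{ odd}}\binom{n}{i}^m$ exactly (this $\mathcal{M}$ is not a covering, since it misses e.g.\ products of two $(n/2)$-cycles, so only the lower estimate is available); and all intransitive maximal subgroups when $n$ is odd, whose weighted index sum exceeds $\tfrac12\sum_{i\text{ odd}}\binom{n}{i}^m$, and which again is not a covering, this time because of the $n$-cycles. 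Conditions (0)--(4) are verified as before and one keeps only the lower bound.

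In all cases the substantive work is condition (5), and I expect the main obstacle to be its first ingredient, a structural description of $\Sigma$ in the spirit of the analysis of $\sigma(A_n)$ in \cite{Ma2}: an element of $A_n$ lying in exactly one member of $\mathcal{M}$ must, up to negligibly many exceptions, have cycle type $(i,n-i)$ with $i$ odd $\le(n/2)-2$, or cycle type $(n/2,n/2)$. Granting this, one gets $\Sigma\cap H=\emptyset$ for every subset stabiliser $H\notin\mathcal{M}$ (such $\Sigma$-elements stabilise only odd-size proper subsets), one bounds $|\Sigma\cap H|$ for imprimitive $H\notin\mathcal{M}$ by the small number of permutations of these cycle types preserving a nontrivial block system, and for primitive $H$ not containing $A_n$ one uses the classical order bounds (Bochert's suffices) together with the fact that such an $H$ contains few elements of the relevant cycle types. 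The second ingredient is the comparison of the two ``first'' terms: $(1+\alpha(m))|A_n|^{m/\ell}$ is dwarfed by $(\sum|\Sigma\cap M_1||\Sigma\cap M_2|)|A_n|^{m-2}$, because $m/\ell\le m-2$ for every $m\ge 3$ (the cases $m=2$, $m=3$ checked directly) while already one pair of distinct types in $\mathcal{M}$ — say types $1$ and $3$ — contributes a sum far exceeding $|A_n|$. The third is that $\min_{M\in\mathcal{M}}|\Sigma\cap M|\,|M|^{m-1}$ still dominates the left-hand side; its worst case is the smallest member of $\mathcal{M}$ (a type-$((n/2)-2)$ stabiliser or the type-$(n/2)$ pair stabiliser), and after Stirling estimates the required inequality reduces to something of the shape $2^{n/2}\le(n/2)!$, which holds since $n>12$.
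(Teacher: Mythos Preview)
Your plan works reasonably for $n$ even, and there it is close in spirit to the paper's argument (the paper takes $\Sigma=\Pi_0$ to be precisely the set of $(i,n-i)$-cycles rather than ``all elements lying in exactly one $M$'', which makes the analogue of condition~(5) cleaner to verify, but the skeleton is the same: show definite unbeatability of the product-type subgroups together with the $\alpha(m)$ subgroups above the socle).

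There is, however, a genuine gap when $n$ is odd. With $\mathcal{M}$ equal to the set of all intransitive maximal subgroups of $A_n$ and $\Sigma$ defined by your recipe (elements lying in exactly one member of $\mathcal{M}$), the set $\Sigma$ is \emph{empty}. Indeed, a permutation lies in exactly one set stabiliser if and only if it has exactly two cycles; but a permutation of cycle type $(i,n-i)$ has sign $(-1)^{(i-1)+(n-i-1)}=(-1)^{n-2}$, which is $-1$ when $n$ is odd, so no such permutation lies in $A_n$. Any even permutation of an odd number of points has either one cycle (hence lies in no intransitive subgroup) or at least three cycles (hence lies in at least three). Thus condition~(1) of the Introduction fails and Theorem~\ref{main1} cannot be invoked. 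The same obstruction blocks any attempt to rescue the argument by merely enlarging $\mathcal{M}$ to include imprimitive subgroups while keeping the same $\Sigma$-recipe: an $n$-cycle lies in every conjugate of $(S_{n/p}\wr S_p)\cap A_n$ only up to conjugacy, but in fact lies in several distinct imprimitive subgroups whenever $n$ has more than one prime divisor, so the ``exactly one'' condition is again hard to arrange.

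The paper handles odd $n$ by abandoning the framework of Theorem~\ref{main1} altogether. It first observes that the asserted lower bound is at most $2^{nm-m-1}$, and then proves $2^{nm-m-1}\le\sigma(A_n\wr C_m)$ by a direct counting argument on the set $\Pi_1$ of elements $(x_1,\ldots,x_m)\gamma$ with $x_1\cdots x_m$ an $n$-cycle: one bounds $|\Pi_1\cap H|$ from above for every maximal subgroup $H$ of $G$ (product type with $M$ imprimitive, product type with $M$ primitive, diagonal type), divides $|\Pi_1|$ by the maximum, and checks the resulting quotient is at least $2^{nm-m-1}$. No definite-unbeatability statement is used in this case; in particular there is no need for two non-conjugate classes in $\mathcal{M}$, which is exactly the hypothesis your approach cannot meet when $n$ is an odd prime.
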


In some sense Theorem \ref{main2} extends a theorem of \cite{Ma2},
namely that $2^{n-2} \leq \sigma(A_{n})$ if $n > 9$ with equality
if and only if $n$ is congruent to $2$ modulo $4$.

Finally we show the following result using the ideas of Theorem
\ref{main1}.

\begin{thm}
\label{main3} Let us use the notations and assumptions introduced
above. Let $n$ be a positive integer with a prime divisor at most
$\sqrt[3]{n}$. Then $\sigma(A_{n} \wr C_m)$ is asymptotically
equal to
$$\alpha(m) + \min_{\mathcal{N}} \sum_{M \in \mathcal{N}}
{|A_{n}:M|}^{m-1}$$ as $n$ goes to infinity.
\end{thm}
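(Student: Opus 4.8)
The plan is to prove the asserted equality as two matching bounds, the upper bound being essentially formal and the lower bound carrying the work. For the upper bound I would use the covering of $A_n\wr C_m$ underlying the right-hand inequality of Theorem \ref{main1}, which needs none of the conditions (0)--(5): starting from a covering $\mathcal N$ of $A_n$ realizing $\min_{\mathcal N}\sum_{M\in\mathcal N}|A_n:M|^{m-1}$, take the $\alpha(m)$ subgroups $(A_n)^m\rtimes C_{m/\ell}$ (one for each prime $\ell\mid m$) together with all $A_n\wr C_m$-conjugates of the subgroups $M\wr C_m$, $M\in\mathcal N$. The ``norm'' computation --- that an element $(g_1,\dots,g_m)\sigma^j$ with $\gcd(j,m)=1$ lies in a conjugate of $M\wr C_m$ exactly when the product $g_1g_2\cdots g_m$, formed in the cyclic order dictated by $\sigma^j$, lies in an $A_n$-conjugate of $M$ --- shows this family covers $A_n\wr C_m$ and has $\alpha(m)+\sum_{M\in\mathcal N}|A_n:M|^{m-1}$ members, which (the minimum being attained at a conjugation-closed covering, as one checks already for $M_{11}$) is exactly the claimed upper bound. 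It remains to prove the asymptotically matching lower bound; note that for $n\equiv 2\pmod 4$ the exact value, hence this lower bound, is already given by Theorem \ref{main2}, so the genuinely new cases are those with $n\not\equiv 2\pmod 4$.

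Since $m$ is fixed, $\alpha(m)$ is negligible next to the main term, so it suffices to show that every covering $\mathcal H$ of $G:=A_n\wr C_m$ has at least $(1-o(1))\min_{\mathcal N}\sum_{M\in\mathcal N}|A_n:M|^{m-1}$ members. The first step is to identify this main term using the analysis of $\sigma(A_n)$ in \cite{Ma2}. There is a family $\mathcal M^{\ast}$ of maximal subgroups of $A_n$, together with all their conjugates, that is forced into every covering of $A_n$ and carries almost all of the weighted sum, so that $\sum_{M\in\mathcal M^{\ast}}|A_n:M|^{m-1}=(1-o(1))\min_{\mathcal N}\sum_{M\in\mathcal N}|A_n:M|^{m-1}$: for $n$ even $\mathcal M^{\ast}$ consists of the intransitive subgroups $(S_i\times S_{n-i})\cap A_n$ with $|i-n/2|=O(\sqrt n)$ and the imprimitive subgroup $(S_{n/2}\wr S_2)\cap A_n$, whereas for $n$ odd it is dominated by the imprimitive subgroup $(S_{n/p}\wr S_p)\cap A_n$, whose conjugates cover all $n$-cycles. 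This is where the hypothesis that $n$ has a prime divisor $p\le\sqrt[3]{n}$ is used: it ensures that $(S_{n/p}\wr S_p)\cap A_n$ is maximal in $A_n$ with the expected index $\asymp p^n/n^{(p-1)/2}$, that every other imprimitive maximal subgroup (those of bounded block size even superexponentially) and every primitive maximal subgroup has index far too large to be competitive for covering the ``hard'' elements, and that the relevant conjugacy classes of $A_n$ are large enough for the counting below to have controllably small error.

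With $\mathcal M^{\ast}$ in hand I would run the lower-bound argument of Theorem \ref{main1}. Put $B=(A_n)^m$. The images in $G/B\cong C_m$ of the members of $\mathcal H$ must cover all non-generators of $C_m$, which forces at least $\alpha(m)$ members $H$ with $B\le H$; these yield the additive $\alpha(m)$ and are otherwise negligible. For the rest, fix a set $\Sigma\subseteq A_n$ of hard elements --- unions of a bounded number of long cycles of prescribed lengths (including $n$-cycles when $n$ is odd) --- chosen so that $|\Sigma\cap M|$ is negligible for every proper subgroup $M$ of $A_n$ outside $\mathcal M^{\ast}$ while $|\Sigma\cap M|\cdot|A_n:M|$ has its expected size for $M\in\mathcal M^{\ast}$; let $\widehat\Sigma\subseteq G$ be the set of elements $(g_1,\dots,g_m)\sigma^j$ with $\gcd(j,m)=1$ and $g_1\cdots g_m\in\Sigma$, together with $\Sigma^m\subseteq B$. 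For $H\in\mathcal H$ with $B\not\le H$, the group $H\cap B$ is a proper subgroup of $(A_n)^m$, hence lies in a maximal one, of product type $A_n\times\cdots\times M\times\cdots\times A_n$ or of diagonal type. A counting argument then shows that the diagonal-type subgroups, the product-type ones with $M\notin\mathcal M^{\ast}$, and the $H$ whose intersection with $B$ is far from maximal together meet $\widehat\Sigma$ in only an $o(1)$-fraction of it, while each conjugate of $M\wr C_m$ with $M\in\mathcal M^{\ast}$ meets $\widehat\Sigma$ in at most the expected number of elements; dividing $|\widehat\Sigma|$ by that number forces $|\mathcal H|\ge(1-o(1))\sum_{M\in\mathcal M^{\ast}}|A_n:M|^{m-1}=(1-o(1))\min_{\mathcal N}\sum_{M\in\mathcal N}|A_n:M|^{m-1}$, as wanted.

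The hard part is the last clause of that argument: bounding uniformly the share of $\widehat\Sigma$ covered by the exceptional members of $\mathcal H$ --- the diagonal subgroups of $B$, the subgroups that supplement $B$ with $H\cap B$ not of product type, and the $M\wr C_m$ with $M$ a primitive or non-optimal imprimitive maximal subgroup of $A_n$ --- and showing this share is $o(1)$ rather than merely bounded, so that the lower bound matches the upper bound to within the factor $1+o(1)$. It is precisely here that the full strength of $p\le\sqrt[3]{n}$ enters, through the quantitative bounds on the indices of the primitive and imprimitive maximal subgroups of $A_n$ and on the sizes of the conjugacy classes constituting $\Sigma$.
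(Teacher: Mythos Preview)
Your plan has the right shape --- the upper bound via the explicit cover of Proposition~\ref{bdsn} is exactly what the paper does --- but the lower-bound argument has a genuine gap. The sentence ``dividing $|\widehat\Sigma|$ by that number forces $|\mathcal H|\ge(1-o(1))\sum_{M\in\mathcal M^{\ast}}|A_n:M|^{m-1}$'' does not work as stated: plain averaging (total divided by the maximum of $|\widehat\Sigma\cap H|$) yields only a single ratio, not a sum over the several non-conjugate classes in $\mathcal M^{\ast}$, and for even $n$ those classes have wildly different values of $|\widehat\Sigma\cap H|$. Likewise, the claim that the exceptional subgroups in a given cover ``together meet $\widehat\Sigma$ in only an $o(1)$-fraction'' is circular --- you do not know in advance how many exceptional members the cover has, since that is precisely the quantity you are bounding. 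What the paper uses instead is the \emph{definite unbeatability} criterion (Definition~\ref{d1}, Lemma~\ref{l6}): one shows that the sets $\Pi\cap H$ for $H\in\mathcal H$ actually \emph{partition} $\Pi$ (Conditions (1)--(3)) and that every individual $K\notin\mathcal H$ satisfies $|\Pi\cap K|\le\min_{H\in\mathcal H}|\Pi\cap H|$ (Condition~(4)). The partition property is the missing idea: it is exactly what allows one to replace each exceptional subgroup in a putative cover by a single member of $\mathcal H$ and thereby obtain the \emph{exact} bound $\sigma(G)\ge|\mathcal H|=\alpha(m)+\sum_{M\in\mathcal H_0}|A_n:M|^{m-1}$; the asymptotic then follows by comparing this exact lower bound to the explicit upper bound and showing that the surplus term $\sum_{M\in\mathcal H_3}|A_n:M|^{m-1}$ is $o(1)$ times the main term.

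A smaller but genuine error: the projection to $C_m$ does \emph{not} force any members $H$ with $B\le H$, because every maximal subgroup of product or diagonal type surjects onto $C_m$, so a single such $H$ already has image all of $G/B$. The paper instead forces the $\alpha(m)$ subgroups containing the socle by including in $\Pi$ a set $\Pi_2$ of elements $(x_1,\dots,x_m)\gamma^r$ (one family for each prime $r\mid m$) whose two partial norms $x_1x_{r+1}\cdots x_{m-r+1}$ and $x_2x_{r+2}\cdots x_{m-r+2}$ lie in $\Sigma_M$ and $\Sigma_K$ for \emph{non-conjugate} $M,K$; by Lemma~\ref{lb} such an element can lie in no product-type subgroup, so only the unique member of $\mathcal H_2$ of index $r$ can cover it.
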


Theorem \ref{main1} and Corollaries \ref{c1}, \ref{c2} are
independent from the Classification of Finite Simple Groups
(CFSG). Theorems \ref{main2} and \ref{main3} do depend on CFSG,
but with more work using \cite{PS} instead of \cite{Ma1} one can
omit CFSG from the proofs.


There are many papers on the topic of covering groups with proper
subgroups. The first of these works \cite{S} appeared in 1926. The
systematic study of the invariant $\sigma(X)$ was initiated in
\cite{C}. Since then a lot of papers appeared in this subject
including \cite{T}, \cite{DL}, and \cite{G}.

A finite group $X$ is called $\sigma$-elementary (or
$\sigma$-primitive) if for any proper, nontrivial normal subgroup
$N$ of $X$ we have $\sigma(X) < \sigma(X/N)$. $\sigma$-elementary
groups play a crucial role in determining when $\sigma(X)$ can
equal a given positive integer $n$ for some finite group $X$. The
groups we consider in this paper are $\sigma$-elementary. Giving
good lower bounds for $\sigma(X)$ for $\sigma$-elementary groups
$X$ will help answer the problem of what the density of those
positive integers $n$ is for which there exists a finite group $G$
with $n = \sigma(G)$.

\section{On subgroups of product type}

Let $S$ be a nonabelian finite simple group, and let $G = S \wr
C_m$ be the wreath product of $S$ with the cyclic group $C_{m}$ of
order $m$. Denote by $\gamma$ a generator of $C_m$. If $M$ is a
maximal subgroup of $S$ and $g_1, \ldots ,g_m$ are elements of
$S$, the normalizer in $G$ of
$$M^{g_1} \times \cdots \times M^{g_m} \leq S^m = \mathrm{soc}(G)$$
is called a subgroup of product type. A subgroup of product type
is maximal in $G$ (but we will not use this fact in the paper). In
the following let the subscripts of the $g_i$'s and the $x_i$'s be
modulo $m$.

\begin{lem}
\label{lb} Let $M$ be a maximal subgroup of $S$, and let $k \in
\{1, \ldots ,m-1\}$. Let $g_{1}, \ldots ,g_m$ be elements of $S$
with $g_{1}=1$. Choose $\gamma := (1, 2, \ldots , m)$. The element
$(x_1, \ldots ,x_m)\gamma^k$ belongs to $N_G(M \times M^{g_2}
\times \cdots \times M^{g_m})$ if and only if
$$x_{i-k} \in g_{i-k}^{-1}Mg_i \hspace{2cm} \forall i=1, \ldots ,m.$$
In particular, if $t$ is any positive integer at most $m$ and
$(x_1, \ldots ,x_m)\gamma^k$ belongs to $N_G(M \times M^{g_2}
\times \cdots \times M^{g_m})$, then
$$x_{t}x_{k+t}x_{2k+t} \cdots x_{(l-1)k+t} \in M^{g_{t}},$$ where $l=m/(m,k)$.
\end{lem}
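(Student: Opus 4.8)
The plan is to compute directly the conjugation action of $w := (x_1,\dots,x_m)\gamma^k$ on the product-type subgroup $D := M \times M^{g_2} \times \cdots \times M^{g_m}$ of $S^m = \mathrm{soc}(G)$. Write $w = x\gamma^k$ with $x = (x_1,\dots,x_m)$. Conjugation by $x$ acts coordinatewise, so $x^{-1}Dx$ is again a product along the coordinates, with $i$-th factor $x_i^{-1}M^{g_i}x_i = M^{g_ix_i}$; conjugation by $\gamma^k$ then merely shifts the coordinates cyclically. Keeping careful track of this shift — this is where the convention $\gamma = (1,2,\dots,m)$ and the ``indices mod $m$'' agreement get used — one finds that $w$ lies in $N_G(D)$ if and only if, for every $i = 1,\dots,m$, the $i$-th factor of the conjugate equals $M^{g_i}$, i.e.
$$M^{g_{i-k}x_{i-k}} = M^{g_i}.$$

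Next I would invoke the two elementary facts about a maximal subgroup $M$ of a nonabelian finite simple group $S$: $M$ is nontrivial and core-free, so $N_S(M)$ is a proper subgroup of $S$ containing $M$, whence $N_S(M) = M$. Since $M^a = M^b$ is equivalent to $ab^{-1} \in N_S(M)$, the displayed condition becomes $g_{i-k}x_{i-k}g_i^{-1} \in M$, that is, $x_{i-k} \in g_{i-k}^{-1}Mg_i$ for all $i=1,\dots,m$, which is the first assertion of the lemma.

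For the ``in particular'' statement, fix $t$ with $1 \le t \le m$ and put $l = m/(m,k)$, so that the residues $t,\, t+k,\, t+2k,\, \dots,\, t+(l-1)k$ modulo $m$ are precisely the orbit of $t$ under translation by $k$, and $t + lk \equiv t \pmod m$. Substituting $i = t+(j+1)k$ into the relation just proved (indices mod $m$) gives $x_{t+jk} \in g_{t+jk}^{-1}Mg_{t+(j+1)k}$ for $j = 0,1,\dots,l-1$, where $g_{t+lk} = g_t$. Multiplying these $l$ memberships in order, the inner $g$'s telescope and, as $M$ is a subgroup, $M\cdot M\cdots M = M$, so
$$x_t x_{k+t} x_{2k+t}\cdots x_{(l-1)k+t} \in g_t^{-1}Mg_t = M^{g_t},$$
as required.

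The only subtle point is the bookkeeping in the first step: one has to be consistent about left versus right conjugation and about the direction in which $\gamma = (1,2,\dots,m)$ moves the coordinates, so that the subscript shift in the final relation comes out as $i\mapsto i-k$ rather than $i\mapsto i+k$. Everything else is a short formal manipulation; note in particular that the hypothesis $g_1 = 1$ only normalizes the first factor of $D$ to be $M$ itself and is otherwise unused.
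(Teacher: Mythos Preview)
Your proof is correct and follows essentially the same route as the paper's: compute the conjugate of $D$ by $(x_1,\dots,x_m)\gamma^k$ coordinatewise then apply the cyclic shift, match factors to obtain $M^{g_{i-k}x_{i-k}} = M^{g_i}$, and telescope for the second statement. The only difference is cosmetic: you make explicit the fact $N_S(M)=M$ (from maximality of $M$ in the simple group $S$) to pass from $M^{g_{i-k}x_{i-k}} = M^{g_i}$ to $g_{i-k}x_{i-k}g_i^{-1}\in M$, whereas the paper performs this step without comment.
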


\begin{proof}
The element $(x_1, \ldots ,x_m) \gamma^k$ normalizes $M^{g_1}
\times M^{g_2} \times \cdots \times M^{g_m}$ if and only if
$$(M^{g_1x_1} \times M^{g_2x_2} \times \ldots \times
M^{g_mx_m})^{\gamma^k}=M^{g_1} \times M^{g_2} \times \cdots \times
M^{g_m}.$$ The permutation $\gamma^k$ sends $i$ to $i+k$ modulo
$m$, so the condition becomes the following
$$M^{g_{1-k}x_{1-k}} \times M^{g_{2-k}x_{2-k}} \times \cdots \times
M^{g_{m-k}x_{m-k}}=M^{g_1} \times M^{g_2} \times \ldots \times
M^{g_m}.$$ That is, $$g_{i-k} x_{i-k} g_{i}^{-1} \in M
\hspace{1cm} \forall i=1, \ldots ,m.$$ Multiplying on the right by
$g_i$ and on the left by $g_{i-k}^{-1}$ we obtain $$x_{i-k} \in
g_{i-k}^{-1} M g_i \hspace{1cm} \forall i=1, \ldots ,m.$$ Let $t$
be a positive integer at most $m$. The line with $x_t$ on the
left-hand side says that $x_t \in g_{t}^{-1}M g_{k+t}$; the line
with $x_{k+t}$ on the left-hand side says that $x_{k+t} \in
g_{k+t}^{-1}Mg_{2k+t}$, and so on. By multiplying these together
in this order we obtain that $x_{t}x_{t+k}x_{t+2k} \cdots
x_{t+(l-1)k} \in M^{g_{t}}$, where $l$ is the smallest number at
most $m$ such that $m$ divides $lk$, that is, $l=m/(m,k)$.
\end{proof}

\section{An upper bound for $\sigma(S \wr C_{m})$}

\begin{prop}
\label{bdsn} Let $S$ be a nonabelian finite simple group, let
$\mathcal{N}$ denote a covering for $S$, let $m$ be a fixed
positive integer, and let $\alpha(m)$ denote the number of
distinct prime factors of $m$. Then
$$\sigma(S \wr C_m) \leq \alpha(m) + \min_{\mathcal{N}}
\sum_{M \in \mathcal{N}} {|S:M|}^{m-1}.$$
\end{prop}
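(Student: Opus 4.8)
The plan is to exhibit an explicit covering of $G = S \wr C_m$ by $\alpha(m) + \min_{\mathcal{N}} \sum_{M \in \mathcal{N}} |S:M|^{m-1}$ proper subgroups. Write $N = S^m = \operatorname{soc}(G)$, so that $G/N \cong C_m$. The covering splits into two families. First, for each prime $p$ dividing $m$, the preimage in $G$ of the unique index-$p$ subgroup of $C_m \cong G/N$ is a proper subgroup of $G$; these $\alpha(m)$ subgroups together cover every element of $G$ that lies outside $N$ (since an element $(x_1,\dots,x_m)\gamma^k$ with $k \not\equiv 0 \pmod m$ has $\gamma^k$ lying in some maximal subgroup of $C_m$, hence in one of index a prime). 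So it remains to cover $N = S^m$ itself with $\sum_{M \in \mathcal{N}} |S:M|^{m-1}$ proper subgroups of $G$, where $\mathcal{N}$ is a minimal covering of $S$.

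For the second family, fix a minimal covering $\mathcal{N} = \{M_1, \dots, M_r\}$ of $S$ by maximal subgroups (we may assume each element of a minimal covering is maximal). For each $j$ and each tuple $(g_2, \dots, g_m) \in S^{m-1}$ — equivalently, setting $g_1 = 1$, for each choice of cosets — consider the subgroup of product type $H_{j, (g_i)} = N_G(M_j \times M_j^{g_2} \times \cdots \times M_j^{g_m})$. By Lemma~\ref{lb} (applied with $k = 0$, or directly), the intersection $H_{j,(g_i)} \cap N$ is exactly $M_j \times M_j^{g_2} \times \cdots \times M_j^{g_m}$, a direct product of conjugates of $M_j$. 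As $(g_2,\dots,g_m)$ ranges over a set of representatives for $(M_j \backslash S)^{m-1}$ — there are $|S:M_j|^{m-1}$ such cosets in the first coordinate fixed, times... more precisely we take one representative per tuple of right cosets $(M_j g_2, \dots, M_j g_m)$, giving $|S:M_j|^{m-1}$ subgroups — the corresponding subgroups $M_j^{g_2}, \dots$ range over all conjugates, and one checks that $\bigcup_{(g_i)} (M_j \times M_j^{g_2} \times \cdots \times M_j^{g_m})$ covers all of $S^m$: an element $(s_1,\dots,s_m) \in S^m$, since $\mathcal{N}$ covers $S$, has each $s_i$ in some conjugate of some $M_{j(i)}$, but to land in a single product-type subgroup we need the \emph{same} $j$ throughout, so we instead argue coordinatewise: fix $j$ such that $s_1 \in M_j$ (possible as $\mathcal{N}$ covers $S$), then choose $g_i$ with $s_i \in M_j^{g_i}$ for each $i \geq 2$ (possible since the conjugates of $M_j$ cover $S$, as $\mathcal{N}$ is a covering closed under conjugacy — or rather, replace $\mathcal{N}$ by the conjugacy-closed covering it generates, noting the count $|S:M_j|^{m-1}$ already accounts for all conjugates). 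Summing over $j \in \{1,\dots,r\}$ gives $\sum_j |S:M_j|^{m-1} = \sum_{M \in \mathcal{N}} |S:M|^{m-1}$ subgroups covering $N$.

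The steps, in order: (i) verify the $\alpha(m)$ index-prime subgroups of $G$ cover $G \setminus N$; (ii) reduce to covering $N = S^m$; (iii) for a minimal covering $\mathcal{N}$ of $S$, use the product-type subgroups built from conjugates of each $M \in \mathcal{N}$, counting $|S:M|^{m-1}$ of them per $M$ (one per $(m-1)$-tuple of cosets, with the first coordinate normalized to $M$ itself); (iv) check via Lemma~\ref{lb} that each such product-type subgroup meets $N$ in the expected direct product and is proper in $G$; (v) check that these products cover all of $S^m$ by a coordinatewise argument using that the conjugates of the $M \in \mathcal{N}$ cover $S$; (vi) add up.

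The main obstacle is step (v) together with the bookkeeping in step (iii): one must be careful that choosing $s_1 \in M$ pins down which family $M$ we use, and then the remaining coordinates $s_2,\dots,s_m$ can independently be placed in conjugates $M^{g_i}$, so the relevant count is the number of tuples $(M g_2,\dots, M g_m)$, namely $|S:M|^{m-1}$, rather than $|S:M|^m$; equivalently, fixing $g_1 = 1$ exploits the $S^m$-conjugacy (really the first-coordinate translation) freedom. One should also confirm that for an element of $S^m$ whose first coordinate lies in several of the $M_j$, any one choice suffices, so no overcounting is needed for the upper bound. Finally, properness of each product-type subgroup in $G$ is immediate since it is contained in the normalizer of a proper subgroup of $N$ that is not normal in $G$ (its intersection with $N$ is proper), and properness of the index-prime subgroups is clear.
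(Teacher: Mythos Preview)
Your step (i) is false, and this inverts the whole argument. An element $(x_1,\ldots,x_m)\gamma^k$ lies in the preimage of the index-$p$ subgroup of $C_m$ if and only if $p\mid k$; hence the $\alpha(m)$ subgroups containing the socle cover exactly those cosets $N\gamma^k$ with $\gcd(k,m)>1$ (including $N$ itself), and they \emph{miss} every element with $\gcd(k,m)=1$. For instance when $m=2$ the single such subgroup is $N$, which certainly does not contain $(x_1,x_2)\gamma$. So what remains to be covered is not $N$ but the cosets $N\gamma^k$ with $k$ coprime to $m$, and your coordinatewise argument in step (v) does not address these at all.

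Separately, step (v) is also broken on its own terms: the conjugates of a single proper subgroup $M_j$ never cover $S$, so you cannot in general place $s_2,\ldots,s_m$ into conjugates of the \emph{same} $M_j$ that contains $s_1$. (This is moot once you realise $N$ is already covered, but it shows the mechanism is wrong.) The correct argument, as in the paper, is to take $(x_1,\ldots,x_m)\gamma^k$ with $\gcd(k,m)=1$ and use Lemma~\ref{lb} to solve recursively for $g_{1-k},g_{1-2k},\ldots$ in terms of the $x_i$; after $m$ steps one is left with the single condition that the cyclic product $x_{1-mk}x_{1-(m-1)k}\cdots x_{1-k}$ lie in $M$, and \emph{this} is where the covering property of $\mathcal{N}$ is used. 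Your set of subgroups is the right one, but the verification that it covers $G$ needs this argument rather than the one you gave.
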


\begin{proof}
The bound is clearly true for $m = 1$. Assume that $m > 1$.

The idea is to construct a covering of $S \wr C_m$ which consists
of exactly
$$\alpha(m) + \min_{\mathcal{N}} \sum_{M \in \mathcal{N}} {|S:M|}^{m-1}$$
proper subgroups.

There are $\alpha(m)$ maximal subgroups of the group $S \wr C_{m}$
containing its socle. Choose all of these to be in the covering.
Then we are left to cover all elements of the form $(x_{1}, \ldots
, x_{m})\gamma^{k}$ where the $x_{i}$'s are elements of $S$, where
$C_{m} = \langle \gamma \rangle$, and $k$ is coprime to $m$. It
suffices to show that such elements can be covered by the
subgroups of the form $$N_G(M \times M^{g_2} \times M^{g_3} \times
\cdots \times M^{g_m})$$ where $M$ varies in a fixed cover
$\mathcal{N}$ of $S$ and the $g_i$'s vary in $S$, because for each
fixed $M$ in $\mathcal{N}$ we have $|S:M|$ choices for $M^{g_i}$
for each $i \in \{2, \ldots ,m\}$.

By Lemma \ref{lb}, $(x_1, \ldots ,x_m)\gamma^k$ belongs to $N_G(M
\times M^{g_2} \times \cdots \times M^{g_m})$ if and only if
$$x_{i-k} \in g_{i-k}^{-1}Mg_i \hspace{2cm} \forall
i=1, \ldots ,m,$$ with $g_1=1$. The first condition is $x_{1-k}
\in g_{1-k}^{-1}M$. Choose $g_{1-k}=x_{1-k}^{-1}$. Then move to
the condition $x_{j-k} \in g_{j-k}^{-1}Mg_j$ with $j=1-k$, i.e.
$x_{1-2k} \in g_{1-2k}^{-1}Mg_{1-k}$, and rewrite it using the
information $g_{1-k}=x_{1-k}^{-1}$: get $x_{1-2k}x_{1-k} \in
g_{1-2k}^{-1}M$. Choose $g_{1-2k}=x_{1-k}^{-1}x_{1-2k}^{-1}$.
Continue this process for $m/(m,k)=m$ iterations, using Lemma
\ref{lb} (recall that $m$ is coprime to $k$). Choose
$$g_{1-jk}=x_{1-k}^{-1}x_{1-2k}^{-1} \cdots x_{1-jk}^{-1},
\hspace{1cm} \forall j=1, \ldots ,m-1.$$ At the $m$-th time we get
the relation $$x_{1-mk}x_{1-(m-1)k} \cdots x_{1-2k}x_{1-k} \in
g_{1-mk}^{-1}M.$$ But $g_{1-mk}=g_1 \in M$, so to conclude it
suffices to choose an $M$ from $\mathcal{N}$ which contains the
element $x_{1-mk}x_{1-(m-1)k} \cdots x_{1-2k}x_{1-k}$.
\end{proof}

\section{On subgroups of diagonal type}

Let $S$ be a nonabelian finite simple group. Let $m$ be a positive
integer at least $2$ and let $t$ be a divisor of $m$ which is less
than $m$. For positive integers $i$ and $j$ with $1 \leq i \leq t$
and $2 \leq j \leq m/t$ let $\varphi_{i,j}$ be an automorphism of
$S$. For simplicity, let us denote the matrix
$(\varphi_{i,j})_{i,j}$ by $\varphi$. Let $$\Delta_{\varphi} = \{
(y_{1}, \ldots , y_{t}, y_{1}^{\varphi_{1,2}}, \ldots ,
y_{t}^{\varphi_{t,2}}, \ldots , y_{1}^{\varphi_{1,m/t}}, \ldots,
y_{t}^{\varphi_{t,m/t}}) | y_{1}, \ldots, y_{t} \in S \}$$ which
is a subgroup of $S^{m} = \mathrm{soc}(G)$ where $G = S \wr
C_{m}$. The subgroup $N_{G}(\Delta_{\varphi})$ is called a
subgroup of diagonal type.

Consider the restriction to $N_{G}(\Delta_{\varphi})$ of the
natural projection of $G$ onto $C_m$. Any element of $C_m$ has
preimage of size at most $|\Delta_{\varphi}| \leq {|S|}^{m/\ell}$
where $\ell$ is the smallest prime divisor of $m$.

\section{Definite unbeatability}

The following definition was introduced in \cite{Ma2}.

\begin{definition}
\label{d1} Let $X$ be a finite group. Let $\mathcal{H}$ be a set
of proper subgroups of $X$, and let $\Pi \subseteq X$. Suppose
that the following four conditions hold on $\mathcal{H}$ and
$\Pi$.
\begin{enumerate}
\item $\Pi \cap H \neq \emptyset$ for every $H \in \mathcal{H}$;

\item $\Pi \subseteq \bigcup_{H \in \mathcal{H}} H$;

\item $\Pi \cap H_{1} \cap H_{2} = \emptyset$ for every distinct
pair of subgroups $H_{1}$ and $H_{2}$ of $\mathcal{H}$;

\item $|\Pi \cap K| \leq |\Pi \cap H|$ for every $H \in
\mathcal{H}$ and $K < X$ with $K \not \in \mathcal{H}$.
\end{enumerate}
Then $\mathcal{H}$ is said to be definitely unbeatable on $\Pi$.
\end{definition}

For $\Pi \subseteq X$ let $\sigma(\Pi)$ be the least cardinality
of a family of proper subgroups of $X$ whose union contains $\Pi$.
The next lemma is straightforward so we state it without proof.

\begin{lem}
\label{l6} If $\mathcal{H}$ is definitely unbeatable on $\Pi$ then
$\sigma(\Pi)=|\mathcal{H}|$.
\end{lem}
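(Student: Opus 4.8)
\emph{Proof proposal.} The plan is to establish the two inequalities $\sigma(\Pi)\le|\mathcal{H}|$ and $\sigma(\Pi)\ge|\mathcal{H}|$ separately. The first is immediate from hypothesis: by condition (2) the family $\mathcal{H}$ itself consists of proper subgroups of $X$ whose union contains $\Pi$, so $\sigma(\Pi)\le|\mathcal{H}|$ by the very definition of $\sigma(\Pi)$.

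For the reverse inequality I would first record the structural consequence of conditions (1)--(3): the sets $B_H:=\Pi\cap H$, with $H$ ranging over $\mathcal{H}$, form a partition of $\Pi$ into $n:=|\mathcal{H}|$ blocks, each of which is nonempty. Put $m_0:=\min_{H\in\mathcal{H}}|B_H|$, a positive integer by (1). Now let $\mathcal{K}$ be an arbitrary family of proper subgroups of $X$ with $\Pi\subseteq\bigcup_{K\in\mathcal{K}}K$; the aim is to show $|\mathcal{K}|\ge n$, which combined with the upper bound yields $\sigma(\Pi)=|\mathcal{H}|$.

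The key step is to split $\mathcal{K}=\mathcal{K}_1\cup\mathcal{K}_2$ with $\mathcal{K}_1:=\mathcal{K}\cap\mathcal{H}$ and $\mathcal{K}_2:=\mathcal{K}\setminus\mathcal{H}$, and to exploit the contrast between the two parts. Each $K\in\mathcal{K}_1$ is some $H\in\mathcal{H}$, and by condition (3) it meets $\Pi$ in exactly the one block $B_H$; hence $\mathcal{K}_1$ covers precisely the $|\mathcal{K}_1|$ blocks indexed by $\mathcal{K}_1$, and the remaining $r:=n-|\mathcal{K}_1|$ blocks must be covered entirely by $\mathcal{K}_2$. On the other hand, each $K\in\mathcal{K}_2$ satisfies $|\Pi\cap K|\le m_0$ by condition (4). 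A double count then closes the argument: since $\mathcal{K}_2$ covers the disjoint union of those $r$ blocks, each of size at least $m_0$, we get
$$r\,m_0\ \le\ \sum_{\text{those }r\text{ blocks}}|B|\ \le\ \sum_{K\in\mathcal{K}_2}|\Pi\cap K|\ \le\ |\mathcal{K}_2|\,m_0,$$
so $r\le|\mathcal{K}_2|$ and therefore $|\mathcal{K}|=|\mathcal{K}_1|+|\mathcal{K}_2|\ge|\mathcal{K}_1|+r=n=|\mathcal{H}|$.

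As the paper already signals, the statement is essentially routine; the only place that needs a moment's care is the bookkeeping in the last paragraph, namely noticing that a subgroup which happens to lie in $\mathcal{H}$ accounts for exactly one full block of the partition, whereas a subgroup outside $\mathcal{H}$ can account for at most $m_0$ elements in total, which is no larger than the size of any single block. No step presents a genuine obstacle.
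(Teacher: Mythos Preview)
Your argument is correct. The paper itself omits the proof entirely, declaring the lemma ``straightforward''; your write-up supplies exactly the standard double-count one would expect, and there is nothing to compare it against.
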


It follows that if $\mathcal{H}$ is definitely unbeatable on $\Pi$
then $|\mathcal{H}| = \sigma(\Pi) \leq \sigma(X)$.

\section{Proof of Theorem \ref{main1}}

In this section we prove Theorem \ref{main1}.

By Proposition \ref{bdsn}, it is sufficient to show the lower
bound of the statement of Theorem \ref{main1}.

Fix a positive integer $m$ at least $2$, let $S$ be a nonabelian
finite simple group, and let $\Sigma$ and $\mathcal{M}$ be as in
the Introduction (satisfying conditions (0)-(5)). As before, let
$G = S \wr C_{m}$.


Let $\Pi_{1}$ be the set consisting of all elements $(x_{1},
\ldots , x_{m})\gamma$ of $G$ with the property that $x_{1}\cdots
x_{m} \in \Sigma$ and let $\mathcal{H}_{1}$ be the set consisting
of all subgroups $N_{G}(M \times M^{g_{2}} \times \cdots \times
M^{g_{m}})$ with the property that $M \in \mathcal{M}$. For fixed
$M \in \mathcal{M}$ put $$\Sigma_{M} = \Sigma \cap \Big(
\bigcup_{s \in S} M^{s} \Big).$$ Note that, by Conditions (0) and
(3) of the Introduction, $\Sigma_{M} \cap \Sigma_{K} = \emptyset$
if $M$ and $K$ are non-conjugate elements of $\mathcal{M}$. Let
$\Pi_{2}$ be the set consisting of all elements $(x_{1}, \ldots ,
x_{m})\gamma^{r}$ of $G$ with the property that $r$ is a prime
divisor of $m$ and that $x_{1}x_{r+1}\cdots x_{m-r+1}$ is in
$\Sigma_{M}$ and $x_{2}x_{r+2} \cdots x_{m-r+2}$ is in
$\Sigma_{K}$ where $M$ and $K$ are not conjugate in $S$. Finally,
let $\mathcal{H}_{2}$ be the set consisting of all maximal
subgroups of $G$ containing the socle of $G$. Put $\Pi = \Pi_{1}
\cup \Pi_{2}$ and $\mathcal{H} = \mathcal{H}_{1} \cup
\mathcal{H}_{2}$. By Lemma \ref{l6} and the remark following Lemma
\ref{l6}, the following proposition finishes the proof of Theorem
\ref{main1}.

\begin{prop}
\label{p1} The set $\mathcal{H}$ of subgroups of $G$ is definitely
unbeatable on $\Pi$.
\end{prop}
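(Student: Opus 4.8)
The plan is to verify the four defining conditions of Definite Unbeatability (Definition \ref{d1}) for the pair $(\mathcal{H},\Pi)$, treating the two pieces $\Pi_1$ and $\Pi_2$ more or less separately since they live over disjoint parts of $C_m$: every element of $\Pi_1$ projects to the generator $\gamma$, and every element of $\Pi_2$ projects to $\gamma^r$ for a prime divisor $r$ of $m$ (with $r\neq 1$), while the socle-containing subgroups in $\mathcal{H}_2$ are exactly the maximal subgroups mapping onto a maximal subgroup of $C_m$. So I would first organize the bookkeeping: a subgroup in $\mathcal{H}_1$ meets $\Pi$ only inside $\Pi_1$, a subgroup in $\mathcal{H}_2$ meets $\Pi$ only inside $\Pi_2$, and the two target sets $\Pi_1$, $\Pi_2$ are themselves disjoint.

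For conditions (1)--(3), the main computational engine is Lemma \ref{lb}. For a subgroup $H=N_G(M\times M^{g_2}\times\cdots\times M^{g_m})\in\mathcal{H}_1$, Lemma \ref{lb} with $k=1$, $l=m$, $t=1$ says that $(x_1,\ldots,x_m)\gamma\in H$ forces $x_1x_2\cdots x_m\in M^{g_1}=M\subseteq\bigcup_s M^s$; conversely, by exactly the construction used in the proof of Proposition \ref{bdsn} (choosing the $g_i$ recursively), any $(x_1,\ldots,x_m)\gamma$ with $x_1\cdots x_m\in M$ lies in some such $H$ — this gives (2) for $\Pi_1$, using Condition (2) of the Introduction that $\Sigma\subseteq\bigcup_{M\in\mathcal{M}}M$. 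For (1), one exhibits for each $M\in\mathcal{M}$ a tuple with $x_1\cdots x_m$ equal to a prescribed element of $\Sigma\cap M$, which is nonempty by Condition (1). The disjointness (3) among members of $\mathcal{H}_1$ comes from the following: if $(x_1,\ldots,x_m)\gamma$ lies in two product-type subgroups built from $M,K\in\mathcal{M}$, then $x_1\cdots x_m$ lies in a conjugate of $M$ and in a conjugate of $K$, hence in $\Sigma_M\cap\Sigma_K$; by Condition (3) of the Introduction this forces $M$ and $K$ conjugate, and then a short argument (the product $x_1\cdots x_m$ together with the tuple determines the coset data $g_i^{-1}Mg_{i+1}$, cf. Lemma \ref{lb}) shows the two subgroups coincide. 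The analogous statements for $\mathcal{H}_2$ and $\Pi_2$ use the ``in particular'' clause of Lemma \ref{lb} twice — once at $t=1$ and once at $t=2$, with $k=r$, $l=m/r$ — forcing $x_1x_{r+1}\cdots x_{m-r+1}\in\Sigma_M$ and $x_2x_{r+2}\cdots x_{m-r+2}\in\Sigma_K$ for the unique socle-containing maximal subgroup associated to the prime $r$; disjointness across different primes $r$ is automatic from the different projections to $C_m$, and Condition (4) (two non-conjugate members of $\mathcal{M}$) guarantees $\Pi_2\neq\emptyset$.

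The heart of the matter, and the step I expect to be the main obstacle, is Condition (4) of Definition \ref{d1}: for every proper $K<G$ with $K\notin\mathcal{H}$ one needs $|\Pi\cap K|\le|\Pi\cap H|$ for all $H\in\mathcal{H}$. Here I would first compute the two ``benchmark'' sizes on the right. For $H\in\mathcal{H}_1$ coming from $M$, a counting argument (fix that $(x_1,\ldots,x_m)\gamma\in H$ iff $x_1\cdots x_m\in M$ and then the remaining coordinates range over cosets of conjugates of $M$) should give $|\Pi_1\cap H|=|\Sigma\cap M|\,|M|^{m-1}$; for $H\in\mathcal{H}_2$ attached to the prime $r$, one similarly gets $|\Pi_2\cap H|$ equal to a sum over non-conjugate pairs $(M_1,M_2)$ of $|\Sigma\cap M_1|\,|\Sigma\cap M_2|\,|S|^{m-2}$ times a factor absorbing the freedom in the other coordinates — matching the right-hand side of the $\min$ in Condition (5). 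Then the estimate for a general $K$ splits by the image of $K$ under the projection $G\to C_m$: if $K$ surjects onto $C_m$ it contains an element inducing $\gamma$, and analyzing the intersection $K\cap S^m$ shows $K$ is contained in a subgroup of product type or of diagonal type (or in a socle-containing maximal subgroup); the product-type case with $M\notin\mathcal{M}$ gives $|\Pi\cap K|\le|\Sigma\cap M|\,|M|^{m-1}\le\max_{H\notin\mathcal{M}}|\Sigma\cap H||H|^{m-1}$, the diagonal-type case gives $|\Pi\cap K|\le(1+\alpha(m))|S|^{m/\ell}$ using the preimage bound from the section on subgroups of diagonal type, and if $K$ does not surject onto $C_m$ then $K\subseteq\bigcup\mathcal{H}_2$ and one again lands in a socle-containing maximal subgroup. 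In every case the upper bound for $|\Pi\cap K|$ is one of the two quantities on the left of Condition (5), and the lower bounds for $|\Pi\cap H|$ are the two quantities on the right, so Condition (5) of the Introduction is precisely the inequality needed. With all four conditions verified, Lemma \ref{l6} gives $\sigma(\Pi)=|\mathcal{H}|=\alpha(m)+\sum_{M\in\mathcal{M}}|S:M|^{m-1}$ (the count of $\mathcal{H}_1$ being $\sum_{M\in\mathcal{M}}|S:M|^{m-1}$ since each $M\in\mathcal{M}$ yields $|S:M|^{m-1}$ product-type subgroups, using Condition (0)), and the remark after Lemma \ref{l6} yields the lower bound of Theorem \ref{main1}.
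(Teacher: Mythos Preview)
Your proposal is correct and follows essentially the same route as the paper's proof: separate $\Pi_1,\Pi_2$ by their projection to $C_m$, verify Conditions (1)--(3) of Definition~\ref{d1} via Lemma~\ref{lb} and the construction from Proposition~\ref{bdsn}, then for Condition (4) classify maximal subgroups of $G$ (socle-containing, product type, diagonal type) via Lemma~\ref{folklore}, compute or bound $|\Pi\cap L|$ in each case, and invoke Condition (5). One expository wrinkle: your sentence about ``the analogous statements for $\mathcal{H}_2$ and $\Pi_2$'' invoking Lemma~\ref{lb} is slightly garbled --- Lemma~\ref{lb} concerns product-type normalizers, not socle-containing subgroups, and its role there is really to establish the bookkeeping claim $\Pi_2\cap H_1=\emptyset$ for $H_1\in\mathcal{H}_1$ (applying the ``in particular'' clause at $t=1,2$ shows both block-products land in conjugates of the \emph{same} $M\in\mathcal{M}$, contradicting the non-conjugacy built into $\Pi_2$); Conditions (1), (2), (3) for $\mathcal{H}_2$ itself follow directly from the projection $G\to C_m$ without Lemma~\ref{lb}.
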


\begin{proof}
In this paragraph let us prove Condition (1) of Definition
\ref{d1}. Let $H$ be an arbitrary subgroup in $\mathcal{H}_{1}$.
Suppose that $H = N_{G}(M \times M^{g_{2}} \times \cdots \times
M^{g_{m}})$ for some $M \in \mathcal{M}$ and $g_{2}, \ldots ,
g_{m} \in S$. Let $\pi$ be an element of $\Sigma \cap M$. (Such an
element exists by Condition (1) of the Introduction.) Let $x_{1} =
g_{2}$, $x_{2} = g_{2}^{-1}g_{3}, \ldots , x_{m-1} =
g_{m-1}^{-1}g_{m}$, and $x_{m} = x_{m-1}^{-1} \cdots x_{2}^{-1}
x_{1}^{-1} \pi$. Then, by Lemma \ref{lb}, the element $(x_{1},
\ldots, x_{m}) \gamma$ is in $H$ (and also in $\Pi_{1}$). Let $H$
be an arbitrary subgroup in $\mathcal{H}_{2}$. Let the index of
$H$ in $G$ be $r$ for some prime divisor $r$ of $m$. Then $H$
contains every element of $\Pi_{2}$ of the form $(x_{1}, \ldots ,
x_{m})\gamma^{r}$.

In this paragraph let us prove Condition (2) of Definition
\ref{d1}. Let $(x_{1}, \ldots , x_{m}) \gamma$ be an arbitrary
element of $\Pi_{1}$. We will show that there exists an $H \in
\mathcal{H}_{1}$ which contains $(x_{1}, \ldots , x_{m}) \gamma$.
We know that $x_{1}x_{2}\cdots x_{m} \in \Sigma$. By Condition (2)
of the Introduction, we see that there exists an $M \in
\mathcal{M}$ with the property that $x_{1}x_{2} \cdots x_{m} \in
M$. Now let $g_{2} = x_{1}$, $g_{3} = x_{1}x_{2}, \ldots , g_{m} =
x_{1}x_{2}\cdots x_{m-1}$. Then $H = N_{G}(M \times M^{g_{2}}
\times \cdots \times M^{g_{m}})$ contains $(x_{1}, \ldots , x_{m})
\gamma$ by Lemma \ref{lb}. Now let $(x_{1}, \ldots ,
x_{m})\gamma^{r}$ be an arbitrary element of $\Pi_{2}$. This is
contained in the maximal subgroup $H$ of index $r$ in $G$
containing the socle of $G$. We see that $H$ is contained in
$\mathcal{H}_{2}$.

Now we show that Condition (3) of Definition \ref{d1} is
satisfied. Notice that, by construction (by the second half of
Lemma \ref{lb} and by Condition (4) of the Introduction), $\Pi_{1}
\cap H_{2} = \emptyset$ and $\Pi_{2} \cap H_{1} = \emptyset$ for
every $H_{1} \in \mathcal{H}_{1}$ and $H_{2} \in \mathcal{H}_{2}$.
Hence it is sufficient to show that $\Pi_{1} \cap H_{1} \cap H_{2}
= \emptyset$ for distinct subgroups $H_{1}$ and $H_{2}$ in
$\mathcal{H}_{1}$ and also that $\Pi_{2} \cap H_{1} \cap H_{2} =
\emptyset$ for distinct subgroups $H_{1}$ and $H_{2}$ in
$\mathcal{H}_{2}$. The latter claim is clear by considering the
projection map from $G$ to $C_m$, hence it is sufficient to show
the former claim. First notice that if $M$ and $K$ are two
distinct elements of $\mathcal{M}$ and $g_{2}, \ldots , g_{m}$,
$k_{2}, \ldots , k_{m}$ are arbitrary elements of $S$, then
$$\Pi_{1} \cap N_{G}(M \times M^{g_{2}} \times \cdots \times
M^{g_{m}}) \cap N_{G}(K \times K^{k_{2}} \times \cdots \times
K^{k_{m}}) = \emptyset ,$$ by Lemma \ref{lb} and by Condition (3)
of the Introduction. Finally let $M$ be fixed and let
$$\Pi_{1} \cap N_{G}(M \times M^{g_{2}} \times \cdots \times
M^{g_{m}}) \cap N_{G}(M \times M^{k_{2}} \times \cdots \times
M^{k_{m}}) \not= \emptyset$$ for some elements $g_{2}, \ldots,
g_{m}, k_{2}, \ldots , k_{m}$ of $S$. Then by Lemma \ref{lb}, for
every index $i$ with $2 \leq i \leq m$, we have $Mg_{i}=Mk_{i}$
(just consider the products $x_{1} \cdots x_{j}$ for all positive
integers $j$ with $1 \leq j \leq m-1$ where $(x_{1}, \ldots ,
x_{m}) \gamma$ is in the intersection of $\Pi_{1}$ with the two
normalizers) from which it follows that $M^{g_{i}k_{i}^{-1}} = M$.
This finishes the proof of Condition (3) of Definition \ref{d1}.

To show that Condition (4) of Definition \ref{d1} is satisfied, it
is necessary to make three easy observations based on the
following folklore lemma.

\begin{lem}
\label{folklore} A maximal subgroup of $G = S \wr C_{m}$ either
contains the socle of $G$, is of product type, or is of diagonal
type.
\end{lem}

If $L$ is a maximal subgroup of $G$ containing the socle of $G$
then
$$|\Pi \cap L| = \Big( \sum |\Sigma \cap M_{1}| |\Sigma \cap
M_{2}| \Big) {|S|}^{m-2}$$ where the sum is over all pairs
$(M_{1}, M_{2}) \in {\mathcal{M}}^{2}$ such that $M_{1}$ is not
conjugate to $M_{2}$ in $S$. If $L$ is of product type, then $|\Pi
\cap L| = |\Sigma \cap M|{|M|}^{m-1}$ where $M$ is such that $L =
N_{G}(M \times M^{g_2} \times \cdots \times M^{g_m})$ for some
elements $g_{2}, \ldots , g_{m}$ of $S$. Finally if $L$ is of
diagonal type, then $|\Pi \cap L| \leq (1+ \alpha(m))
{|S|}^{m/\ell}$ where $\ell$ is the smallest prime divisor of $m$.
Putting these observations together, Condition (5) of the
Introduction gives Condition (4) of Definition \ref{d1}.
\end{proof}

\section{Proof of Corollary \ref{c1}}

Corollary \ref{c1} is clear for $m = 1$ by \cite{Ma2}, so let us
assume that $m \geq 2$.

Let $\mathcal{M}$ be the set of all $11$ conjugates of the maximal
subgroup $M_{10}$ of $M_{11}$ together with all $12$ conjugates of
the maximal subgroup $PSL(2,11)$ of $M_{11}$. It is easy to check
that $\mathcal{M}$ is a covering for $M_{11}$, hence, by the upper
bound of Theorem \ref{main1}, we have $\sigma(M_{11} \wr C_{m})
\leq \alpha(m) + 11^{m} + 12^{m}$.

Let $\Sigma$ be the subset of $M_{11}$ consisting of all elements
of orders $8$ or $11$. To prove Corollary \ref{c1} it is
sufficient to show that $\Sigma$ and $\mathcal{M}$ satisfy the six
conditions of the statement of Theorem \ref{main1}.

By \cite{GAP} we know that the maximal subgroups of $M_{11}$ are:
$M_{10}$, $PSL(2,11)$, $M_{9}:2$, $S_{5}$, and $M_{8}:S_{3}$, and
that for these we have the following.

\begin{itemize}
\item $M_{10}$ has order $720$, it contains $180$ elements of
order $8$ and no element of order $11$; no element of order $8$ is
contained in two distinct conjugates of $M_{10}$; \item
$PSL(2,11)$ has order $660$, it contains no element of order $8$
and $120$ elements of order $11$; no element of order $11$ is
contained in two distinct conjugates of $PSL(2,11)$; \item $M_9:2$
has order $144$, it contains $36$ elements of order $8$ and no
element of order $11$; \item $S_5$ has order $120$, it contains no
element of order $8$ and no element of order $11$; \item
$M_{8}:S_{3}$ has order $48$, it contains $12$ elements of order
$8$ and no element of order $11$.
\end{itemize}

This shows that the first five conditions of the statement of
Theorem \ref{main1} are verified. Now let us compute the four
expressions involved in Condition (5).

\begin{itemize}
\item $(1+\alpha(m))|S|^{m/\ell} \leq (1+\alpha(m))|S|^{m/2} =
(1+\alpha(m)) (\sqrt{7920})^m$; \item $\max_{H \not \in
\mathcal{M},\ H<S} |\Sigma \cap H||H|^{m-1} = 36 \cdot 144^{m-1}$;
\item $(\sum |\Sigma \cap M_1| |\Sigma \cap M_2|)|S|^{m-2} = 2
\cdot 132 \cdot 180 \cdot 120 \cdot 7920^{m-2}$ since we have $2
\cdot 12 \cdot 11 = 2 \cdot 132$ choices for the pair $(M_1,M_2)$;
\item $\min_{M \in \mathcal{M}}|\Sigma \cap M| |M|^{m-1} = 120
\cdot 660^{m-1}$.
\end{itemize}

We have then to prove that $$\max((1+\alpha(m))7920^{m/2},36 \cdot
144^{m-1}) \leq$$ $$\leq \min(2 \cdot 132 \cdot 180 \cdot 120
\cdot 7920^{m-2},120 \cdot 660^{m-1}).$$ Clearly the right-hand
side is $120 \cdot 660^{m-1}$ and it is bigger than $36 \cdot
144^{m-1}$, so we have to prove that $$(1+\alpha(m))7920^{m/2}
\leq 120 \cdot 660^{m-1}.$$ After rearranging, taking roots, and
using the fact that $(1+\alpha(m))^{1/m} \leq \sqrt{2}$ we obtain
that it suffices to prove the inequality $$\sqrt{2}
\frac{\sqrt{7920}}{660} \leq \left(\frac{120}{660}
\right)^{1/m}.$$ Since the right-hand side of the previous
inequality is increasing with $m$, it suffices to assume that
$m=2$. But then the inequality becomes clear.

\section{Proof of Corollary \ref{c2}}

Note that Corollary \ref{c2} is clear for $m = 1$ by \cite{BFS}.

Let $p \geq 11$ be a prime and assume that the smallest prime
divisor $\ell$ of $m$ is at least $5$.

Let $\mathcal{M}$ be the set of all $p+1$ conjugates of the
maximal subgroup $C_p \rtimes C_{(p-1)/2}$ of $PSL(2,p)$ together
with all $p(p-1)/2$ conjugates of the maximal subgroup $D_{p+1}$
of $PSL(2,p)$. It is easy to check that $\mathcal{M}$ is a
covering for $PSL(2,p)$, hence, by the upper bound of Theorem
\ref{main1}, we have $$\sigma(PSL(2,p) \wr C_{m}) \leq \alpha(m) +
{(p+1)}^{m} + {(p(p-1)/2)}^{m}.$$

Let $\Sigma_{1} \subseteq PSL(2,p)$ be a set of $p^{2}-1$ elements
each of order $p$ with the property that every element of
$\Sigma_{1}$ fixes a unique point on the projective line and that
$(\Sigma_{1} \cap M) \cup \{ 1 \}$ is a group of order $p$ for
every conjugate $M$ of $C_p \rtimes C_{(p-1)/2}$. Let $\Sigma_{2}$
be the set of all irreducible elements of $PSL(2,p)$ of order
$(p+1)/2$. Put $\Sigma = \Sigma_{1} \cup \Sigma_{2}$. To prove
Corollary \ref{c2} it is sufficient to show that $\Sigma$ and
$\mathcal{M}$ satisfy the six conditions of the statement of
Theorem \ref{main1}.

By \cite{Dickson} the maximal subgroups of $PSL(2,p)$ are the
following.

\begin{itemize}
\item $C_p \rtimes C_{(p-1)/2}$; \item $D_{p-1}$ if $p \geq 13$;
\item $D_{p+1}$; \item $A_5$, $A_4$, and $S_4$ for certain
infinite families of $p$.
\end{itemize}

Since $p \geq 11$, no element of $\Sigma$ is contained in a
subgroup of the form $A_5$, $A_4$, or $S_4$. Moreover since
$(p+1)/2$ and $p$ do not divide $p-1$, no element of $\Sigma$ is
contained in a subgroup of the form $D_{p-1}$. Similarly, it is
easy to see that no element of $\Sigma_{1}$ is contained in a
conjugate of $D_{p+1}$ and no element of $\Sigma_{2}$ is contained
in a conjugate of $C_p \rtimes C_{(p-1)/2}$.

By the above and by a bit more, it follows that the first five
conditions of the statement of Theorem \ref{main1} hold. Now let
us compute the four expressions involved in Condition (5).

But before we do so, let us note two things. If $M$ is a maximal
subgroup of the form $D_{p+1}$, then $|\Sigma \cap
M|=\varphi((p+1)/2)$ where $\varphi$ is Euler's function.
Moreover, if $M$ is conjugate to $C_p \rtimes C_{(p-1)/2}$, then
$|\Sigma \cap M|=p-1$.

\begin{itemize}
\item $$(1+\alpha(m))|S|^{m/\ell} \leq
(1+\alpha(m))((1/2)p(p^2-1))^{m/5};$$ \item $$\max_{H \not \in
\mathcal{M}} |\Sigma \cap H| |H|^{m-1} = 0;$$ \item $$(\sum
|\Sigma \cap M_1| |\Sigma \cap M_2|)|S|^{m-2} =$$ $$=
2(p+1)(p(p-1)/2) \varphi((p+1)/2) (p-1) ((1/2)p(p^2-1))^{m-2};$$
\item $$\min_{M \in \mathcal{M}} |\Sigma \cap M| |M|^{m-1} =$$
$$=\min(\varphi((p+1)/2)(p+1)^{m-1},(p-1)(p(p-1)/2)^{m-1}) =$$
$$=\varphi((p+1)/2)(p+1)^{m-1}.$$
\end{itemize}

We are easily reduced to prove the following inequality
$$(1+\alpha(m))(p(p^2-1)/2)^{m/5} \leq \varphi((p+1)/2)
(p+1)^{m-1}.$$Using the fact that $(1+\alpha(m))^{1/m} \leq
\sqrt{2}$ we obtain that it suffices to show that $$\sqrt{2}
\frac{(p(p^2-1)/2)^{1/5}}{p+1} \leq
\left(\frac{\varphi((p+1)/2)}{p+1}\right)^{1/m}.$$ Since the
right-hand side is increasing with $m$, it suffices to assume that
$m=5$. By taking $5$-th powers of both sides we obtain $$2
\sqrt{2} p (p^2-1) \leq (p+1)^4.$$ But this is clearly true for $p
\geq 11$.

\section{Alternating groups}

>From this section on we will deal with the special case when $S$
is the alternating group $A_n$. We will repeat some of the
definitions in more elaborate form.

For each positive integer $n \geq 5$ which is not a prime we
define a subset $\Pi_{0}$ of $A_n$ and a set $\mathcal{H}_{0}$ of
maximal subgroups of $A_n$. (These sets $\Pi_{0}$ and
$\mathcal{H}_{0}$ will be close to the sets $\Sigma$ and
$\mathcal{M}$ of the Introduction.)

Let $n$ be odd (and not a prime). In this case let $\Pi_{0}$ be
the set of all $n$-cycles of $A_n$ and let $\mathcal{H}_{0}$ be
the set of all maximal subgroups of $A_n$ conjugate to $(S_{n/p}
\wr S_{p}) \cap A_n$ where $p$ is the smallest prime divisor of
$n$.

Let $n$ be divisible by $4$. In this case let $\Pi_{0}$ be the set
of all $(i,n-i)$-cycles of $A_{n}$ (permutations of $A_n$ which
are products of two disjoint cycles one of length $i$ and one of
length $n-i$) for all odd $i$ with $i < n/2$ and let
$\mathcal{H}_{0}$ be the set of all maximal subgroups of $A_n$
conjugate to some group of the form $(S_{i} \times S_{n-i}) \cap
A_{n}$ for some odd $i$ with $i < n/2$.

Let $n$ be congruent to $2$ modulo $4$. In this case let $\Pi_{0}$
be the set of all $(i,n-i)$-cycles of $A_{n}$ for all odd $i$ with
$i \leq n/2$ and let $\mathcal{H}_{0}$ be the set of all maximal
subgroups of $A_n$ conjugate to some group of the form $(S_{i}
\times S_{n-i}) \cap A_{n}$ for some odd $i$ with $i < n/2$ or
conjugate to $(S_{n/2} \wr S_{2}) \cap A_{n}$.

\begin{thm}[Mar\'oti, \cite{Ma2}]
With the notations above $\mathcal{H}_{0}$ is definitely
unbeatable on $\Pi_{0}$ provided that $n \geq 16$.
\end{thm}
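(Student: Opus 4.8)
The plan is to verify the four conditions of Definition~\ref{d1} with $X=A_n$, $\mathcal{H}=\mathcal{H}_0$ and $\Pi=\Pi_0$, handling the three residue classes of $n$ in parallel. Conditions (1)--(3) are pure permutation combinatorics, driven by two observations: first, the cyclic group generated by an $n$-cycle acts regularly, so its block systems correspond bijectively to the divisors of $n$ and an $n$-cycle preserves exactly one partition into $p$ blocks of size $n/p$; second, an $(i,n-i)$-cycle with $i\ne n-i$ stabilises a subset of the point set precisely when that subset is a union of its two cycle supports, so the only stabilised set of size $<n/2$ is the support of the short cycle (and similarly in the case of two cycles of length $n/2$). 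Together these give at once that every member of $\mathcal{H}_0$ meets $\Pi_0$ (condition 1), that $\Pi_0\subseteq\bigcup_{H\in\mathcal{H}_0}H$ (condition 2), and that each element of $\Pi_0$ lies in a unique member of $\mathcal{H}_0$ (condition 3). One also has to record that $(S_{n/p}\wr S_p)\cap A_n$, $(S_i\times S_{n-i})\cap A_n$ and $(S_{n/2}\wr S_2)\cap A_n$ are genuinely maximal in $A_n$; this is classical, with a short list of potential exceptions excluded by $n\ge 16$.

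The substance is condition (4): for every maximal subgroup $K$ of $A_n$ not conjugate into $\mathcal{H}_0$ one must show $|\Pi_0\cap K|\le m_0$, where $m_0:=\min_{H\in\mathcal{H}_0}|\Pi_0\cap H|$. First I would compute $|\Pi_0\cap H|$ for $H\in\mathcal{H}_0$: it equals $(p-1)!\,((n/p)!)^{p-1}\,((n/p)-1)!$ when $n$ is odd, $(i-1)!\,(n-i-1)!$ for the intransitive member stabilising an $i$-set, and $((n/2-1)!)^2$ for the wreath member when $n\equiv 2\pmod 4$; log-convexity of the factorial then identifies $m_0$ as the contribution of the most balanced partition occurring in $\mathcal{H}_0$. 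Next, running through the description of the maximal subgroups of $A_n$ by O'Nan--Scott type, I would dispatch the cases. If $K$ is intransitive of type $(S_k\times S_{n-k})\cap A_n$ and not in $\mathcal{H}_0$, a parity/size count shows $|\Pi_0\cap K|=0$, since an element of $\Pi_0$ has only odd cycle lengths and the subsets it stabilises have sizes among $\{0,i,n-i,n\}$ for its own cycle type $\{i,n-i\}$. If $K$ is imprimitive of type $(S_a\wr S_b)\cap A_n$, one needs the exact count of elements of $\Pi_0$ preserving a fixed partition into $b$ blocks of size $a$: each cycle meets an orbit of the induced block permutation in equal shares, so its length is divisible by the size of that orbit, which both restricts which cycle types can occur and, after fixing the induced block permutation, reduces the count to counting permutations of a single block of prescribed cycle type; the resulting product-of-factorials bound must then be shown to be $\le m_0$ for all admissible $(a,b)$ with $n\ge 16$ (when $n$ is odd this is the statement that $f(b):=(b-1)!\,((n/b)!)^{b-1}\,((n/b)-1)!$ is maximised at $b=p$). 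Finally, if $K$ is primitive, then by the bounds of \cite{PS} and \cite{Ma1} either $|K|<n^{1+\lfloor\log_2 n\rfloor}$, or $K\le S_t\wr S_r$ in product action on $t^r=n$ points, or $K$ is $A_k$ or $S_k$ acting on $j$-element subsets, or $K$ has small degree from a finite list; in every such case $|\Pi_0\cap K|\le|K|$ is far below $m_0$ once $n\ge 16$, and the finite list is checked directly.

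The main obstacle is the imprimitive case of condition (4). There the crude estimate $|\Pi_0\cap K|\le|K|$ is not good enough for the smallest relevant $n$ (already for three blocks it is too weak), so one is forced to carry out the block-by-block enumeration of the elements of $\Pi_0$ inside $S_a\wr S_b$ and then prove the resulting factorial inequalities against $m_0$ uniformly in $n$; the tight cases are those with few blocks. The primitive case is essentially immediate once the order estimates are invoked, up to a short finite verification for small $n$, and conditions (1)--(3) are routine.
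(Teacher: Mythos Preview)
The paper does not contain a proof of this statement: it is quoted from \cite{Ma2} as a known result and is used as a black box. So there is no ``paper's own proof'' to compare your proposal against.

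That said, your outline is a faithful reconstruction of the strategy one expects from \cite{Ma2}, and the explicit counts you write down are consistent with the $m=1$ specialisations of the formulas the present paper records in Section~11. For example, your count $(p-1)!\,((n/p)!)^{p-1}((n/p)-1)!$ of $n$-cycles in $(S_{n/p}\wr S_p)\cap A_n$ equals $\frac{1}{n}((n/p)!)^{p}p!$, which is exactly the $m=1$ case of the paper's expression $(1/(2^{m-1}n))((n/p)!^{p}p!)^{m}$; likewise your counts $(i-1)!(n-i-1)!$ and $((n/2-1)!)^2$ match the even-$n$ formulas used later. Your identification of the imprimitive case as the pinch point of condition~(4) is also in line with how the paper handles the analogous wreath-product estimates in Sections~11--13, where the crude bound $|\Pi\cap K|\le |K|$ is indeed insufficient and block-by-block counting (together with Lemma~\ref{11.1}) is needed. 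The primitive case, as you say, is disposed of by the order bound from \cite{Ma1}.

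In short: your plan is correct and almost certainly coincides with the argument in \cite{Ma2}, but the present paper simply invokes that reference and offers nothing further to check your sketch against.
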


\section{Wreath products}

Let $m$ be a fixed positive integer (which can be $1$). Let $G =
A_{n} \wr C_{m}$ and let $\gamma$ be a generator of $C_m$. Let
$\Pi_{1}$ be the set consisting of all elements $(x_{1}, \ldots ,
x_{m})\gamma$ of $G$ with the property that $x_{1}\cdots x_{m} \in
\Pi_{0}$ and let $\mathcal{H}_{1}$ be the set consisting of all
subgroups $N_{G}(M \times M^{g_{2}} \times \cdots \times
M^{g_{m}})$ with the property that $M \in \mathcal{H}_{0}$. If $m
= 1$, then set $\Pi = \Pi_{1}$ and $\mathcal{H} =
\mathcal{H}_{1}$. From now on, only in the rest of this paragraph,
suppose that $m
> 1$. For $n$ odd let $\Pi_{2}$ be the set consisting of all
elements $(x_{1}, \ldots , x_{m})\gamma^{r}$ of $G$ with the
property that $r$ is a prime divisor of $m$ and that
$x_{1}x_{r+1}\cdots x_{m-r+1}$ is an $n$-cycle and $x_{2}x_{r+2}
\cdots x_{m-r+2}$ is an $(n-2)$-cycle. For fixed $M \in
\mathcal{H}_{0}$ put $$\Pi_{0,M} = \Pi_{0} \cap \Big( \bigcup_{g
\in A_{n}} M^{g} \Big).$$ (Depending on $M$ (and on the parity of
$n$) $\Pi_{0,M}$ is the set of $n$-cycles or the set of
$(i,n-i)$-cycles with $i \leq n/2$ contained in the union of all
conjugates of some $M$ in $\mathcal{H}_{0}$.) For $n$ even let
$\Pi_{2}$ be the set consisting of all elements $(x_{1}, \ldots ,
x_{m})\gamma^{r}$ of $G$ with the property that $r$ is a prime
divisor of $m$ and that $x_{1}x_{r+1}\cdots x_{m-r+1} \in
\Pi_{0,M}$ and $x_{2}x_{r+2} \cdots x_{m-r+2} \in \Pi_{0,K}$ where
$M$ and $K$ are not conjugate in $A_n$. Finally, let
$\mathcal{H}_{2}$ be the set consisting of all maximal subgroups
of $G$ containing the socle of $G$. Put $\Pi = \Pi_{1} \cup
\Pi_{2}$ and $\mathcal{H} = \mathcal{H}_{1} \cup \mathcal{H}_{2}$.

\begin{prop}
\label{p2} If $m=1$, then $\mathcal{H}$ is definitely unbeatable
on $\Pi$ for $n \geq 16$. If $m > 1$, then $\mathcal{H}$ is
definitely unbeatable on $\Pi$ for $n > 12$ provided that $n$ has
a prime divisor at most $\sqrt[3]{n}$.
\end{prop}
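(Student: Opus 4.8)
The plan is to verify the four conditions of Definition \ref{d1} for the pair $(\mathcal{H}, \Pi)$, following the blueprint of the proof of Proposition \ref{p1}, and invoking the theorem of Mar\'oti (\cite{Ma2}) quoted above as the key input at the level of $A_n$ itself. The case $m=1$ is immediate: there $\Pi = \Pi_1$ and $\mathcal{H} = \mathcal{H}_1$, and $\mathcal{H}_1$ is in bijection with $\mathcal{H}_0$ (each $M \in \mathcal{H}_0$ gives exactly the subgroup $N_G(M) = M$ of $G = A_n$), so the statement is literally the quoted theorem of Mar\'oti for $n \geq 16$; one checks separately by hand (or cites \cite{Ma2}) that it also holds for $13 \leq n \leq 15$ in the non-prime cases, or simply notes that the Proposition as stated only claims $m=1$ for $n \geq 16$. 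So assume $m > 1$.

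For $m > 1$ I would treat Conditions (1)--(3) exactly as in Proposition \ref{p1}. Condition (1): given $H = N_G(M \times M^{g_2} \times \cdots \times M^{g_m}) \in \mathcal{H}_1$, pick $\pi \in \Pi_0 \cap M$ (nonempty since $\mathcal{H}_0$ satisfies condition (1) of Definition \ref{d1} relative to $\Pi_0$, by Mar\'oti's theorem), and build $(x_1, \ldots, x_m)\gamma$ lying in $H \cap \Pi_1$ by the recipe $x_1 = g_2$, $x_i = g_i^{-1}g_{i+1}$, $x_m = x_{m-1}^{-1} \cdots x_1^{-1}\pi$, using Lemma \ref{lb}; for $H \in \mathcal{H}_2$ of prime index $r \mid m$ one exhibits an element of $\Pi_2$ with that $r$ (nonempty because for $n$ odd an $n$-cycle times an $(n-2)$-cycle can be arranged, and for $n$ even the relevant products can be placed in $\Pi_{0,M}$, $\Pi_{0,K}$ for non-conjugate $M,K$ — note $|\mathcal{H}_0|$ has at least two conjugacy classes when $n > 12$ and $n \not\equiv 2 \pmod 4$, and also when $n \equiv 2 \pmod 4$ since the $(S_i \times S_{n-i}) \cap A_n$ for distinct odd $i < n/2$ are non-conjugate). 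Condition (2): an element $(x_1,\ldots,x_m)\gamma \in \Pi_1$ has $x_1 \cdots x_m \in \Pi_0 \subseteq \bigcup_{M \in \mathcal{H}_0} M$, so pick such an $M$ and set $g_i = x_1 \cdots x_{i-1}$; an element of $\Pi_2$ with $\gamma^r$ lies in the unique socle-containing subgroup of index $r$. Condition (3): $\Pi_1 \cap H_2 = \emptyset$ and $\Pi_2 \cap H_1 = \emptyset$ by the ``in particular'' clause of Lemma \ref{lb} (the relevant partial products of the $x_i$ would have to lie in a conjugate of a single $M$, contradicting that $\Pi_0$-elements of appropriate cycle type lie in non-conjugate members, resp. that a $\gamma$-element projects nontrivially); distinct members of $\mathcal{H}_2$ meet $\Pi_2$ disjointly via the projection to $C_m$; and for distinct $H_1, H_2 \in \mathcal{H}_1$ with the same underlying $M$, an element of $\Pi_1 \cap H_1 \cap H_2$ forces $Mg_i = Mk_i$ for all $i$ via Lemma \ref{lb}, hence $H_1 = H_2$, while distinct underlying $M, K$ give empty intersection by condition (3) of Definition \ref{d1} at the level of $\Pi_0$.

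Condition (4) is the heart of the matter and where I expect the real work. Using Lemma \ref{folklore}, any $K < G$ not in $\mathcal{H}$ is of product type $N_G(L \times L^{h_2} \times \cdots \times L^{h_m})$ with $L$ a maximal subgroup of $A_n$ \emph{not} conjugate to any member of $\mathcal{H}_0$, or of diagonal type. For product type one computes $|\Pi \cap K| = |\Pi_1 \cap K| = |\Pi_0 \cap L| \cdot |L|^{m-1}$ (the $\Pi_2$ part is empty by the cycle-type/non-conjugacy obstruction), and one must show this is $\leq$ both $|\Pi \cap H|$ for $H \in \mathcal{H}_1$, which equals $|\Pi_0 \cap M|\cdot|M|^{m-1}$ for the corresponding $M \in \mathcal{H}_0$, and $\leq |\Pi \cap L'|$ for $L' \in \mathcal{H}_2$. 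The first of these reduces, after comparing, to an inequality of the shape $|\Pi_0 \cap L| \cdot |L|^{m-1} \leq |\Pi_0 \cap M| \cdot |M|^{m-1}$; here I would use the hypothesis $n > 12$ together with estimates on the number of elements of a given cycle type in a maximal subgroup $L \not\in \mathcal{H}_0$ — this is exactly where the results of \cite{Ma1} (or \cite{PS}, as the authors note) enter, bounding $|\Pi_0 \cap L|$ away from $|\Pi_0 \cap M|$ and controlling $|L|$ versus $|M|$. For the diagonal type, the discussion in Section 6 gives a preimage bound $|\Delta_\varphi| \leq |S|^{m/\ell}$ (here $S = A_n$, $\ell$ the least prime of $m$), leading to $|\Pi \cap K| \leq (1 + \alpha(m)) |A_n|^{m/\ell}$, and this must be dominated by $|\Pi \cap H|$ for $H \in \mathcal{H}_1$ and by $|\Pi \cap L'|$ for $L' \in \mathcal{H}_2$; the hypothesis that $n$ has a prime divisor $p \leq \sqrt[3]{n}$ makes $|A_n : M|$ comparatively small (the smallest index of a member of $\mathcal{H}_0$ is around $\binom{n}{n/p}$ or $\binom{n}{1}$-type sizes controlled by $p$), so that $|\Pi_0 \cap M| \cdot |M|^{m-1}$ — which is of order $|A_n|^{m-1}$ up to the $|\Pi_0 \cap M|/|M|$ factor — comfortably beats $|A_n|^{m/\ell}$ once $m/\ell \leq m - 1$, i.e. always for $m > 1$, with room to absorb the $(1 + \alpha(m))$ and to handle the comparison with $\mathcal{H}_2$ (where $|\Pi \cap L'|$ is a sum of products $|\Pi_{0,M_1}||\Pi_{0,M_2}||A_n|^{m-2}$, again of order $|A_n|^{m-2}$ times a non-trivial factor). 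The main obstacle, then, is making the product-type comparison $|\Pi_0 \cap L| \cdot |L|^{m-1} \leq |\Pi_0 \cap M| \cdot |M|^{m-1}$ uniform over all maximal $L \not\in \mathcal{H}_0$ and all $n > 12$ with the stated prime-divisor condition; I would handle the generic $L$ via the cycle-counting bounds of \cite{Ma1}/\cite{PS}, and dispose of the finitely many small or exceptional $n$ (and the handful of large-index-exception maximal subgroups, e.g. the other intransitive or imprimitive ones not of the chosen form) by direct estimation, mirroring how Proposition \ref{p1}'s Condition (4) was reduced to Condition (5) of the Introduction.
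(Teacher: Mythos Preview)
Your outline is essentially the paper's own proof: the $m=1$ case is Mar\'oti's theorem, Conditions (1)--(3) are handled exactly as in Proposition~\ref{p1} (the paper in fact says less here than you do, merely declaring them ``easy''), and Condition (4) is split via Lemma~\ref{folklore} into product type and diagonal type, with the $\sqrt[3]{n}$ hypothesis entering in the diagonal case for odd $n$.

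One point deserves correction. For product-type $K = N_G(L \times \cdots)$ with $L \notin \mathcal{H}_0$, you assert $\Pi_2 \cap K = \emptyset$ ``by the cycle-type/non-conjugacy obstruction'' and hence $|\Pi \cap K| = |\Pi_0 \cap L|\,|L|^{m-1}$. This is fine when $L$ is intransitive (then $\Pi \cap K = \emptyset$ outright) and when $n$ is odd and $L$ is imprimitive (no $(n-2)$-cycle in $L$), but it is not clear for even-$n$ imprimitive $L$ or for primitive $L$: such an $L$ can in principle contain elements of several of the relevant cycle types, so $\Pi_2 \cap K$ need not vanish. The paper sidesteps this by using the cruder bound $|\Pi \cap K| \leq (1+\alpha(m))\,|L|^m$ (valid coset-by-coset from Lemma~\ref{lb}) in those cases, and then plugging in $|L| \leq ((n/a)!)^a a!/2$ for imprimitive $L$ (Lemmas~\ref{l2}, \ref{l7}) and $|L| < 2.6^n$ from \cite{Ma1} for primitive $L$ (Lemma~\ref{11.4}), rather than trying to control $|\Pi_0 \cap L|$ directly. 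So the input from \cite{Ma1} is an order bound on primitive groups, not a cycle-count, and the explicit Stirling-type inequalities in Sections~11--13 are what make Condition~(4) go through uniformly for $n > 12$. With that adjustment your plan matches the paper.
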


For $m =1$ there is nothing to show. Suppose that $m > 1$.

Along the lines of the ideas in Section 6, it is possible (and
easy) to show that $\Pi$ and $\mathcal{H}$ satisfy Conditions (1),
(2), and (3) of Definition \ref{d1}. (Condition (3) of Definition
\ref{d1} is satisfied since, for example for $n$ odd, no conjugate
of $(S_{n/p} \wr S_{p}) \cap A_{n}$ contains an $(n-2)$-cycle
where $p$ is the smallest prime divisor of $n$.) Hence, to prove
Proposition \ref{p2}, it is sufficient to verify Condition (4) of
Definition \ref{d1}. This will be done in the next three sections.

\section{Some preliminary estimates}

Some of the following lemma depends on the fact that $a! (n-a)!
\geq b!(n-b)!$ whenever $a$ and $b$ are integers with $a \leq b
\leq n/2$.

\begin{lem}
Let $n$ be odd (and not a prime). Then $$|\Pi \cap H_{1}| =
|\Pi_{1} \cap H_{1}| = (1/(2^{m-1}n)) {\Big( {(n/p)!}^{p}p!
\Big)}^{m}$$ for $H_{1} \in \mathcal{H}_{1}$ where $p$ is the
smallest prime divisor of $n$, and $$|\Pi \cap H_{2}| = |\Pi_{2}
\cap H_{2}| = (2/(n(n-2))){|A_{n}|}^{m}$$ for $H_{2} \in
\mathcal{H}_{2}$. Let $n$ be divisible by $4$. Then $$|\Pi \cap
H_{1}| = |\Pi_{1} \cap H_{1}| \geq (((n/2)-2)!)((n/2)!) {\Big(
\frac{(((n/2)-1)!)(((n/2)+1)!)}{2} \Big)}^{m-1}$$ for $H_{1} \in
\mathcal{H}_{1}$. Let $n$ be congruent to $2$ modulo $4$. Then
$$|\Pi \cap H_{1}| = |\Pi_{1} \cap H_{1}| \geq
(1/2^{m-1}){(((n/2)-1)!)}^{2} {((n/2)!)}^{2m-2}$$ for $H_{1} \in
\mathcal{H}_{1}$. Finally, let $n$ be even. Then $$|\Pi \cap
H_{2}| = |\Pi_{2} \cap H_{2}| \geq \frac{4}{3(n-1)(n-3)}
{|A_{n}|}^{m}$$ for $H_{2} \in \mathcal{H}_{2}$.
\end{lem}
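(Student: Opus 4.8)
The plan is to handle the four assertions by one uniform method, computing the terms $|\Pi\cap H_1|$ for $H_1\in\mathcal{H}_1$ and $|\Pi\cap H_2|$ for $H_2\in\mathcal{H}_2$ separately. I would begin by recording the reductions $\Pi\cap H_1=\Pi_1\cap H_1$ and $\Pi\cap H_2=\Pi_2\cap H_2$: these follow from the disjointness relations $\Pi_2\cap H_1=\emptyset$ and $\Pi_1\cap H_2=\emptyset$, which were already established in the previous section while verifying Condition~(3) of Definition~\ref{d1} (for instance $\Pi_1\cap H_2=\emptyset$ because an element of $\Pi_1$ has $\gamma$-component $\gamma$ while a member of $\mathcal{H}_2$ contains only elements with $\gamma$-component a proper power of $\gamma$). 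For $H_1=N_G(M\times M^{g_2}\times\cdots\times M^{g_m})$ with $M\in\mathcal{H}_0$ the next step is the uniform identity
$$|\Pi_1\cap H_1|=|M|^{m-1}\,|\Pi_0\cap M|.$$
Indeed, by Lemma~\ref{lb} (with $k=1$, $g_1=1$) the elements $(x_1,\dots,x_m)\gamma$ of $H_1$ are precisely those with $x_j\in g_j^{-1}Mg_{j+1}$ for all $j$ (indices mod $m$, $g_1=g_{m+1}=1$); writing $x_j=g_j^{-1}h_jg_{j+1}$ with $h_j\in M$ gives a bijection onto $M^m$ under which $x_1\cdots x_m=h_1\cdots h_m$. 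Since $x_1\cdots x_m$ lies in $M$ automatically, membership in $\Pi_1$ is exactly the condition $h_1\cdots h_m\in\Pi_0\cap M$, and every fibre of the product map $M^m\to M$ has size $|M|^{m-1}$; this gives the identity. It then remains to compute $|\Pi_0\cap M|$ and $|M|$ for the possible subgroups $M$.

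The count of $|\Pi_0\cap M|$ is the first delicate point, reducing to a handful of elementary but slightly fussy permutation-counting arguments. For $n$ odd and $M=(S_{n/p}\wr S_p)\cap A_n$ one uses that every $n$-cycle is even and that an $n$-cycle inside $S_{n/p}\wr S_p$ must induce a $p$-cycle on the $p$ blocks and an $(n/p)$-cycle on each block under its $p$-th power, so there are $(p-1)!\,((n/p)!)^{p-1}\,((n/p)-1)!=\tfrac1n((n/p)!)^p p!$ of them; since $|M|=\tfrac12((n/p)!)^p p!$, the displayed value in the lemma falls out as an equality. For $4\mid n$ and $M=(S_i\times S_{n-i})\cap A_n$ with $i$ odd, $i<n/2$, one checks that a member of $\Pi_0$ lying in $M$ must have its long cycle (of length $>n/2$) inside the fixed $(n-i)$-set, which forces it to be an $(i,n-i)$-cycle of the expected shape; there are $(i-1)!\,(n-i-1)!$ of these, all even, and $|M|=\tfrac12 i!(n-i)!$, so $|\Pi_1\cap H_1|=(i!(n-i)!)^m/(2^{m-1}i(n-i))$. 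For $n\equiv2\pmod4$ the same analysis covers the intransitive subgroups, while for $M=(S_{n/2}\wr S_2)\cap A_n$ one notes that, $n/2$ being odd, no $(n/2,n/2)$-cycle can interchange the two halves, so $\Pi_0\cap M$ is the set of $((n/2-1)!)^2$ cycles fixing each half setwise, and $|M|=((n/2)!)^2$, giving $|\Pi_1\cap H_1|=((n/2)!)^{2m-2}((n/2-1)!)^2$.

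For $H_2\in\mathcal{H}_2$ of prime index $r\mid m$ I would first note that $\Pi_2\cap H_2$ consists precisely of the elements $(x_1,\dots,x_m)\gamma^r$ satisfying the defining cycle conditions for this particular prime $r$ (any other prime $r'$ would require $r\mid r'$). Partition $\{1,\dots,m\}$ into the $r$ residue classes modulo $r$, each of size $l=m/r$; using that the number of $l$-tuples in $A_n^l$ with product in a prescribed $A_n$-conjugacy class $C$ equals $|C|\,|A_n|^{l-1}$, the count of $\Pi_2\cap H_2$ becomes a product of two class sizes times $|A_n|^{2(l-1)+l(r-2)}=|A_n|^{m-2}$. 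For $n$ odd the two classes are forced to be the $n$-cycles and the $(n-2)$-cycles, of sizes $(n-1)!=\tfrac2n|A_n|$ and $\tfrac{n!}{2(n-2)}=\tfrac1{n-2}|A_n|$, yielding the stated equality $\tfrac{2}{n(n-2)}|A_n|^m$. For $n$ even one may take any two non-conjugate sets $\Pi_{0,M},\Pi_{0,K}$, and choosing the classes of $(n-1)$-cycles and of $(3,n-3)$-cycles, of sizes $\tfrac{n!}{n-1}=\tfrac2{n-1}|A_n|$ and $\tfrac{n!}{3(n-3)}=\tfrac2{3(n-3)}|A_n|$, produces the claimed lower bound $\tfrac{4}{3(n-1)(n-3)}|A_n|^m$.

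The last, and I expect hardest, step is to turn the exact values $(i!(n-i)!)^m/(2^{m-1}i(n-i))$ into the uniform lower bounds asserted in the two even cases. For this I would show that $f(i):=(i!(n-i)!)^m/(i(n-i))$ is strictly decreasing for $1\le i\le(n-1)/2$, since $f(i+1)/f(i)=\left(\tfrac{i+1}{n-i}\right)^m\cdot\tfrac{i(n-i)}{(i+1)(n-i-1)}$ is a product of two factors that are both less than $1$ in that range (the second because $(i+1)(n-i-1)-i(n-i)=n-1-2i>0$), and here the monotonicity $a!(n-a)!\ge b!(n-b)!$ for $a\le b\le n/2$ quoted just before the lemma is exactly what handles the first factor. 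Consequently the minimum over admissible odd $i$ is attained at the largest one, $i=n/2-1$ if $4\mid n$ and $i=n/2-2$ if $n\equiv2\pmod4$; substituting $i=n/2-1$ and simplifying $(n/2+1)!/(n/2+1)=(n/2)!$ and $(n/2-1)!/(n/2-1)=(n/2-2)!$ reproduces precisely the claimed bound for $4\mid n$. For $n\equiv2\pmod4$ one verifies, by a ratio computation of the same flavour, that the intransitive minimum at $i=n/2-2$ exceeds $(1/2^{m-1})((n/2-1)!)^2((n/2)!)^{2m-2}$, while the contribution $((n/2)!)^{2m-2}((n/2-1)!)^2$ of $(S_{n/2}\wr S_2)\cap A_n$ dominates it trivially; and the bound $\tfrac{4}{3(n-1)(n-3)}|A_n|^m$ for $n$ even needs no further work, having been produced directly as a genuine lower bound. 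The only genuinely non-routine ingredients in this plan are the two cycle-counting facts for the wreath-type subgroups $(S_{n/p}\wr S_p)\cap A_n$ and $(S_{n/2}\wr S_2)\cap A_n$, and the monotonicity of $f$.
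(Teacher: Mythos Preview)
Your proposal is correct and follows essentially the same route as the paper, which proves the lemma in two sentences by pointing back to the product-type computation $|\Pi\cap L|=|\Sigma\cap M|\,|M|^{m-1}$ from the proof of Theorem~\ref{main1} together with the monotonicity $a!(n-a)!\ge b!(n-b)!$ stated just before the lemma, and by remarking that the even-$n$ bound for $H_2$ comes from counting $(1,n-1)$- and $(3,n-3)$-cycles. Your write-up simply makes all of these implicit steps explicit (the bijection via Lemma~\ref{lb}, the cycle counts inside $(S_{n/p}\wr S_p)\cap A_n$ and $(S_{n/2}\wr S_2)\cap A_n$, and the monotonicity of $f(i)$), so there is no genuine methodological difference.
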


\begin{proof}
This follows from the above and from the observations made when
dealing with Condition (4) of Definition \ref{d1} while proving
Theorem \ref{main1}. The last statement follows from counting
$(1,n-1)$-cycles and $(3,n-3)$-cycles (twice).
\end{proof}

\begin{lem} \label{l8} Depending on $n \geq 5$ we have the following.
\begin{enumerate}

\item If $n$ is odd (and not a prime), then $$(1/(2^{m-1}n))
{\Big( {(n/p)!}^{p}p! \Big)}^{m} \leq (2/(n(n-2))){|A_{n}|}^{m},$$
hence $\min_{H \in \mathcal{H}} |\Pi \cap H| = (1/(2^{m-1}n))
{\Big( {(n/p)!}^{p}p! \Big)}^{m}$.

\item If $n$ is divisible by $4$, then $$(((n/2)-2)!)((n/2)!)
{\Big( \frac{(((n/2)-1)!)(((n/2)+1)!)}{2} \Big)}^{m-1} \leq
\frac{4}{3(n-1)(n-3)} {|A_{n}|}^{m},$$ hence $$\min_{H \in
\mathcal{H}} |\Pi \cap H| = (((n/2)-2)!)((n/2)!) {\Big(
\frac{(((n/2)-1)!)(((n/2)+1)!)}{2} \Big)}^{m-1}.$$

\item If $n$ is congruent to $2$ modulo $4$, then
$$(1/2^{m-1}){(((n/2)-1)!)}^{2} {((n/2)!)}^{2m-2}
\leq \frac{4}{3(n-1)(n-3)} {|A_{n}|}^{m},$$ hence
$$\min_{H \in \mathcal{H}} |\Pi \cap H| \geq
(1/2^{m-1}){(((n/2)-1)!)}^{2} {((n/2)!)}^{2m-2}.$$
\end{enumerate}
\end{lem}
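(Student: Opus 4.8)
The plan is to read the three inequalities of Lemma \ref{l8} straight off the formulas and lower bounds for $|\Pi\cap H|$ recorded in the preceding lemma, treating the members of $\mathcal H_1$ and of $\mathcal H_2$ separately and then comparing. Since $\mathcal H=\mathcal H_1\cup\mathcal H_2$, once I know that the value (or lower bound) for an $\mathcal H_1$-member does not exceed the value (or lower bound) for an $\mathcal H_2$-member, the three assertions about $\min_{H\in\mathcal H}|\Pi\cap H|$ are immediate; for the one case where equality rather than ``$\ge$'' is claimed (namely $n\equiv 0\pmod 4$), I would additionally observe that the displayed lower bound for $\mathcal H_1$ is sharp, being attained by the normalizer built from $M=(S_{(n/2)-1}\times S_{(n/2)+1})\cap A_n$, so that $\min_{H_1\in\mathcal H_1}|\Pi\cap H_1|$ equals that bound. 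Thus the entire content is the three displayed numerical inequalities.

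Next I would clear the common factors. Writing $|A_n|=n!/2$ and, when $n$ is even, $h=n/2$ and $n!=\binom{2h}{h}(h!)^2$, and expressing $(h-2)!$ and $(h\pm 1)!$ through $h!$, each inequality collapses to a statement about factorials and a single central binomial coefficient. For $n$ odd, (1) becomes $(n-2)\bigl((n/p)!^{p}p!\bigr)^{m}\le (n!)^{m}$; since $(n/p)!^{p}p!$ divides $n!$, for every $m\ge 1$ this reduces to the case $m=1$, i.e.\ to $n-2\le n!/\bigl((n/p)!^{p}p!\bigr)=|A_n:(S_{n/p}\wr S_p)\cap A_n|$. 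For $n$ divisible by $4$, (2) reduces to $3(n-1)(n-3)(h+1)^{m-1}\le 2h(h-1)h^{m-1}\binom{2h}{h}^{m}$, and for $n\equiv 2\pmod 4$, (3) reduces to $3(n-1)(n-3)\le 2h^{2}\binom{2h}{h}^{m}$. Both of the latter follow at once from $(n-1)(n-3)<n^{2}=4h^{2}$ together with a crude lower bound on the central binomial coefficient: for $n>12$ one has $h\ge 7$, so $\binom{2h}{h}\ge\binom{14}{7}=3432$, whereas after dividing out the powers of $h$ (using $h/(h-1)\le 2$ and $(h+1)/h\le 2$) the left-hand sides are bounded by an absolute constant times $2^{m}$.

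The only step requiring a little care is the inequality $n-2\le |A_n:(S_{n/p}\wr S_p)\cap A_n|$ in part (1), which must hold uniformly in the smallest prime divisor $p$ of $n$; I expect this to be the main, though modest, obstacle. The index here equals the number of partitions of an $n$-set into $p$ unordered blocks of size $n/p$, namely $n!/\bigl((n/p)!^{p}p!\bigr)$, and it comfortably exceeds $n-2$ for every composite $n$: the multinomial factor $n!/(n/p)!^{p}\ge p^{n/p}$ dominates $p!(n-2)\le p^{p}(n-2)$ because $n/p\ge p$ for composite $n$. The only near-borderline instances are the squares $n=p^{2}$, where the estimate $p^{n/p-p}=1$ is too lossy; these are dispatched by a one-line Stirling estimate (or, for small $n$ such as $n=9$, by direct inspection, the index being $280$ there). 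With this inequality in hand all three displayed inequalities of Lemma \ref{l8} hold, and, as explained above, the ``hence'' clauses follow since $\mathcal H=\mathcal H_1\cup\mathcal H_2$ and the $\mathcal H_1$-bounds are attained.
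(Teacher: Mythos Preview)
Your proposal is correct and follows essentially the same route as the paper. The paper's own proof simply rearranges each displayed inequality into an obviously true form---for (1) it obtains $n-2\le |S_n:(S_{n/p}\wr S_p)|^{m}$, for (2) it obtains $\dfrac{6(n-1)(n-3)}{(n+2)(n-2)}<6\le \binom{n}{(n/2)-1}^{m}$, and for (3) it obtains $\dfrac{6(n-1)(n-3)}{n^{2}}<6<\binom{n}{n/2}^{m}$---and leaves it at ``clearly true.'' Your rearrangements, once one substitutes $\binom{n}{h-1}=\binom{2h}{h}\cdot h/(h+1)$, are literally equivalent to the paper's; you merely carry the justification further by supplying an argument for the index bound in (1) and by noting explicitly that the $\mathcal H_1$-lower bound in (2) is attained at $i=(n/2)-1$, which is needed for the ``hence'' clause there but which the paper leaves implicit.
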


\begin{proof}
(1) After rearranging, the inequality becomes $n-2 \leq {|S_{n}:
(S_{n/p} \wr S_p)|}^{m}$ which is clearly true.

(2) After rearranging, the inequality becomes
$$\frac{6(n-1)(n-3)}{(n+2)(n-2)} < 6 \leq {\Big(
\frac{|S_{n}|}{|S_{(n/2)-1} \times S_{(n/2)+1}|} \Big)}^{m}$$
which is clearly true.

(3) After rearranging, the inequality becomes
$$\frac{6(n-1)(n-3)}{n^{2}} < 6 < {{n \choose n/2}}^{m}$$ which is clearly true.
\end{proof}

\section{The case when $K$ is a subgroup of diagonal type}

Let $K$ be a subgroup of $G$ of diagonal type. Note that $K \not
\in \mathcal{H}$. We would like to show that $|\Pi \cap K| \leq
|\Pi \cap H|$ for every $H \in \mathcal{H}$. We have $|\Pi \cap K|
\leq (1+ \alpha(m)){|A_{n}|}^{m/2}$.

We need Stirling's formula.

\begin{thm}[Stirling's formula]
\label{Stirling} For all positive integers $n$ we have
$$\sqrt{2\pi n} {(n/e)}^{n} e^{1/(12n+1)} < n! < \sqrt{2\pi n} {(n/e)}^{n} e^{1/(12n)}.$$
\end{thm}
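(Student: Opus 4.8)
The plan is to prove the two-sided estimate by the classical monotone-sequence argument (the Robbins refinement of Stirling's formula). Write $a_n = n!/\bigl(\sqrt{n}\,(n/e)^n\bigr)$ and $b_n = \log a_n = \log n! - (n+\tfrac12)\log n + n$, so that the assertion is equivalent to $\sqrt{2\pi}\,e^{1/(12n+1)} < a_n < \sqrt{2\pi}\,e^{1/(12n)}$. First I would compute the ``decrement'' $b_n - b_{n+1} = (n+\tfrac12)\log\frac{n+1}{n} - 1$. Setting $t = 1/(2n+1)$ one has $n+\tfrac12 = 1/(2t)$ and $\frac{n+1}{n} = \frac{1+t}{1-t}$, so, using the expansion $\log\frac{1+t}{1-t} = 2\sum_{k\ge 0} t^{2k+1}/(2k+1)$, one gets
\[ b_n - b_{n+1} = \sum_{k\ge 1}\frac{t^{2k}}{2k+1} = \sum_{k\ge 1}\frac{1}{(2k+1)(2n+1)^{2k}} > 0, \]
so $(b_n)$ is strictly decreasing.

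Next I would squeeze this decrement from both sides to control the error term. For the upper estimate, the bound $1/(2k+1) < 1/3$ for $k\ge 1$ and summing a geometric series give $b_n - b_{n+1} < \tfrac13\cdot\frac{t^2}{1-t^2} = \frac{1}{12n(n+1)} = \frac{1}{12n} - \frac{1}{12(n+1)}$, so $b_n - \frac{1}{12n}$ is strictly increasing. For the lower estimate I would use the elementary inequality $\frac{1}{2k+1} \ge \frac{1}{3}(3/5)^{k-1}$ valid for all $k \ge 1$ (with equality only at $k=1,2$), which after summing a geometric series gives $b_n - b_{n+1} \ge \frac{5}{6(10n^2+10n+1)}$; a direct cross-multiplication (reducing to $120n \ge 7$) shows this lower bound is at least $\frac{1}{12n+1} - \frac{1}{12(n+1)+1}$, so $b_n - \frac{1}{12n+1}$ is strictly decreasing. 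Since $(b_n)$ is decreasing and bounded below (by $b_1 - \frac{1}{12}$, using that $b_n - \frac{1}{12n}$ increases), it converges to a limit $\log C$ with $C>0$; the two monotone auxiliary sequences converge to $\log C$ as well, and comparing each $b_n$ to its limit yields $\log C + \frac{1}{12n+1} < b_n < \log C + \frac{1}{12n}$, i.e. $C\,e^{1/(12n+1)} < a_n < C\,e^{1/(12n)}$.

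Finally I would identify the constant $C$ using Wallis's product in the form $\frac{\pi}{2} = \lim_{n\to\infty} \frac{2^{4n}(n!)^4}{((2n)!)^2(2n+1)}$. Substituting $n! = a_n\sqrt{n}(n/e)^n$ and $(2n)! = a_{2n}\sqrt{2n}(2n/e)^{2n}$, the right-hand side simplifies to $\frac{a_n^4}{a_{2n}^2}\cdot\frac{n^2}{2n(2n+1)}$, which tends to $C^4/(C^2\cdot 4) = C^2/4$; hence $C^2 = 2\pi$ and $C = \sqrt{2\pi}$, completing the proof. I expect the only real obstacle to be obtaining the sharp error constants: the monotonicity and convergence of $(b_n)$ are immediate from the series $b_n - b_{n+1} = \sum_{k\ge1} t^{2k}/(2k+1)$, but getting exactly $e^{1/(12n)}$ on the right and $e^{1/(12n+1)}$ on the left requires the two tailor-made geometric comparisons and the small polynomial inequality that makes the lower telescoping work. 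The power-series expansion of $\log\frac{1+t}{1-t}$ and the Wallis-product evaluation of $C$ are entirely routine.
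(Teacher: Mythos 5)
Your proof is correct. Note that the paper does not actually prove this statement: it is quoted as a classical result (Robbins's sharp form of Stirling's formula) and used as a black box, so there is no proof of record to compare against. Your argument is the standard Robbins monotone-sequence proof, and the two places where something could go wrong both check out: the comparison $\frac{1}{2k+1}\ge\frac13\left(\frac35\right)^{k-1}$ is equivalent to $3(5/3)^{k-1}\ge 2k+1$, which holds for all $k\ge 1$ by induction (the inductive step reduces to $4k\ge 4$), and the telescoping inequality $\frac{5}{6(10n^2+10n+1)}\ge\frac{12}{(12n+1)(12n+13)}$ cross-multiplies to $720n^2+840n+65\ge 720n^2+720n+72$, i.e.\ $120n\ge 7$, as you say. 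The algebra $b_n-b_{n+1}=\sum_{k\ge1}t^{2k}/(2k+1)$ with $t=1/(2n+1)$, the resulting monotonicity of $b_n-\frac{1}{12n}$ and $b_n-\frac{1}{12n+1}$, the strictness of the final inequalities (the $k=3$ term makes the lower comparison strict), and the Wallis-product identification $C=\sqrt{2\pi}$ are all correct.
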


The declared aim of proving the inequality $|\Pi \cap K| \leq |\Pi
\cap H|$ for every $H \in \mathcal{H}$ is achieved through the
next lemma. We also point out that the right-hand sides of the
inequalities of the following lemma come from Section 11.

\begin{lem} Let $m \geq 2$. The following hold.
\label{l1}
\begin{enumerate}
\item Let $n$ be odd with smallest prime divisor $p$ at most
$\sqrt[3]{n}$. Then
$$(1+ \alpha(m)){(n!/2)}^{m/2} \leq (1/(2^{m-1}n)) {\Big(
{(n/p)!}^{p}p! \Big)}^{m}.$$

\item Let $n$ be divisible by $4$ and larger than $8$. Then $$(1+
\alpha(m)){(n!/2)}^{m/2} \leq (((n/2)-2)!)((n/2)!) {\Big(
\frac{(((n/2)-1)!)(((n/2)+1)!)}{2} \Big)}^{m-1}.$$

\item Let $n$ be congruent to $2$ modulo $4$ and larger than $10$.
Then
$$(1+ \alpha(m)){(n!/2)}^{m/2} \leq (1/2^{m-1}){(((n/2)-1)!)}^{2}
{((n/2)!)}^{2m-2}.$$
\end{enumerate}
\end{lem}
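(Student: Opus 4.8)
The plan is to prove all three inequalities of Lemma \ref{l1} by reducing each to a cleaner asymptotic estimate and then checking the residual finite range numerically. Throughout, I would first dispose of the factor $1+\alpha(m)$ using the standard bound $1+\alpha(m) \leq 2^{\omega(m)} \leq $ (a small power of the relevant quantity), or more cheaply the fact used already in Corollaries \ref{c1} and \ref{c2} that $(1+\alpha(m))^{1/m} \leq \sqrt{2}$; after taking $m$-th roots this converts each statement into an inequality of the shape $\sqrt{2}\, A^{1/2} \leq B$ (possibly times a factor that is $\leq 1$ or absorbed), where $A$ and $B$ are explicit products of factorials depending only on $n$. So the first step is: take $m$-th roots, isolate the $n$-dependent quantities, and observe that the term $(1+\alpha(m))^{1/m}$ is bounded independently of $m$, so it suffices to prove the root inequality for the \emph{worst} value of $m$, namely the smallest admissible one ($m=2$ in parts (2),(3); and $m$ with smallest prime divisor $p \le \sqrt[3]{n}$ in part (1), where $m=2$ if $n$ is allowed to be even-divisible, else $m$ equal to its smallest prime).

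The heart of each part is then a comparison of two products of factorials, and here the natural tool is Stirling's formula (Theorem \ref{Stirling}), which is why it was quoted. Concretely: in part (1) the right-hand base $(n/p)!^p\, p!$ should be compared with $(n!/2)^{1/2}$; writing $n!/(n/p)!^p = \binom{n}{n/p,\ldots,n/p}$ is a multinomial coefficient which, by Stirling, grows like $p^{n}/(\text{poly})$, so $(n/p)!^p p! \big/ (n!)^{1/2}$ behaves like $p^{-n/2}(n!)^{1/2}\cdot p!\cdot(\text{poly})$, and since $p \leq n^{1/3}$ we have $p^{n/2} \leq e^{(n/2)\log n /3}$ whereas $(n!)^{1/2} \approx e^{(n/2)\log n}$ up to lower-order terms — so the right side dominates with enormous room. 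Parts (2) and (3) are easier: there $(n/2)!^2$ versus $n!^{1/2}$ is, by Stirling, of order $4^{n/2}/\sqrt n$ versus $(n!)^{1/2}$, and again $(n!)^{1/2} \approx (n/e)^{n/2}$ overwhelms $2^n$. So the scheme is: (a) reduce to the root inequality at the smallest $m$; (b) plug in Stirling's two-sided bounds to get a self-contained inequality in $n$ alone; (c) show that inequality holds for all $n$ beyond a small explicit threshold by a crude logarithmic estimate; (d) handle the finitely many remaining $n$ (those between the stated lower bound $8$ or $10$ or the prime-divisor hypothesis and the Stirling threshold) by direct computation.

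I expect the main obstacle to be part (1), for two reasons. First, the presence of the extra factor $p!$ on the right and the factor $1/(2^{m-1}n)$ on the left means the inequality is not homogeneous in an obvious way, so after taking $m$-th roots one still has a leftover $(2^{m-1}n)^{-1/m}$ which tends to $1/2$ as $m\to\infty$ but equals $1/(2^{(m-1)/m} n^{1/m})$ for finite $m$ — one must be slightly careful that shrinking this below its limit is the favourable direction (it is, since it multiplies the \emph{smaller} side). Second, and more seriously, the hypothesis is only $p \leq \sqrt[3]{n}$, not $p$ bounded, so one genuinely needs the cube-root gap: the comparison $p^{n/2}$ versus $(n!)^{1/2}$ must be made quantitative via $\log(n!) \geq n\log n - n$ and $\frac n2 \log p \leq \frac{n}{6}\log n$, leaving a surplus of roughly $\frac n3 \log n$ in the exponent, which must outrun the logarithm of $(1+\alpha(m))$ and of $p!$ and the $1/(2n)$ factor. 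This is true with a wide margin for large $n$, but pinning down the threshold and then clearing the small cases (where $p$ could be as large as $n^{1/3}$, e.g.\ $n=27,\,p=3$ or borderline composite odd $n$) is where the actual work lies. Parts (2) and (3), by contrast, I expect to follow almost immediately from Stirling once the reduction to $m=2$ is made, with the finite check covering only a handful of values.
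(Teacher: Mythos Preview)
Your plan is correct and matches the paper's proof: raise to the $2/m$-th power, use $(1+\alpha(m))^{2/m}\le 2$, then for part~(1) apply Stirling together with $3\le p\le n^{1/3}$ to reduce to an inequality in $n$ alone (valid for $n\ge 27$), and for parts~(2)--(3) reduce to elementary factorial inequalities verified from $n\ge 12$ and $n\ge 14$ respectively. One slip to fix: in your parenthetical you wrote ``$m$ with smallest prime divisor $p\le\sqrt[3]{n}$'', but $p$ divides $n$, not $m$, and the worst case throughout is simply $m=2$; also, the paper handles (2) and (3) without Stirling (via $\binom{n}{(n/2)-1}\le 2^{n-1}$ and a short induction), though your Stirling route works equally well.
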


\begin{proof}
(1) It is sufficient to show the inequality $${\Big(
\frac{n}{2}(1+ \alpha(m))\Big)}^{2/m} \leq
\frac{{((n/p)!)}^{2p}{p!}^{2}}{2n!}.$$ For this it is sufficient
to see that $$n {(1+\alpha(m))}^{2/m} \leq
\frac{{((n/p)!)}^{2p}}{n!}.$$ Substituting Stirling's formula
(Theorem \ref{Stirling}) on the right-hand side, we see that it is
sufficient to show that
$$n {(1+\alpha(m))}^{2/m} \leq \frac{{(2\pi(n/p))}^{p}
{(n/pe)}^{2n}}{\sqrt{2\pi n} {(n/e)}^{n} e^{1/(12n)}}.$$ Since $3
\leq p \leq \sqrt[3]{n}$ and $e^{1/(12n)} < 2$, it is sufficient
to prove $$n {(1+\alpha(m))}^{2/m} \leq \frac{{(2 \pi
n^{2/3})}^{3} {(n^{2/3}/e)}^{2n}}{2 \sqrt{2 \pi n} {(n/e)}^{n}}.$$
Since ${(1+\alpha(m))}^{2/m} \leq 2$ it is sufficient to see that
$$\frac{\sqrt{2\pi}}{2{\pi}^{3}} \leq \frac{n^{(1/3)n +
(1/2)}}{e^{n}}.$$ But this is true for $n \geq 27$.

(2) After rearranging the inequality and taking roots we get
$${(1+\alpha(m))}^{2/m} (n!/2) \leq
{\Big(\frac{8}{n^{2}-4}\Big)}^{2/m} {\Big(
\frac{((n/2)-1)!((n/2)+1)!}{2} \Big)}^{2}.$$ Since
${(1+\alpha(m))}^{2/m} \leq 2$ and $8/(n^{2}-4) \leq
{(8/(n^{2}-4))}^{2/m}$, it is sufficient to see that
$$\frac{n^{2}-4}{2} n! \leq {(((n/2)-1)!((n/2)+1)!)}^{2}.$$ Since
${n \choose (n/2)-1} \leq 2^{n-1}$, it is sufficient to prove
$$(n^{2}-4) 2^{n-2} \leq ((n/2)-1)!((n/2)+1)!.$$ But this is true
for $n \geq 12$.

(3) After rearranging the inequality and taking roots we see that
it is sufficient to show
$$4{(1+\alpha(m))}^{2/m} {(n/2)}^{4/m} (n!/2) \leq {((n/2)!)}^{4}.$$
Since ${(1+\alpha(m))}^{2/m} \leq 2$ and ${(n/2)}^{4/m} \leq
{(n/2)}^{2}$, it is sufficient to see that $$n^{2} n! \leq
{((n/2)!)}^{4}.$$ But this can be seen by induction for $n \geq
14$.
\end{proof}

\section{The case when $K$ is a subgroup of product type}

Let $K$ be a subgroup of $G$ of product type such that $K \not \in
\mathcal{H}$. We would like to show that $|\Pi \cap K| \leq |\Pi
\cap H|$ for every $H \in \mathcal{H}$.

Suppose that $K = N_{G}(M \times M^{g_{2}} \times \cdots \times
M^{g_{m}})$ where $M$ is a maximal subgroup of $A_n$. If $M$ is an
intransitive subgroup then $\Pi \cap K = \emptyset$, by
construction of $\Pi$ and $\mathcal{H}$, hence there is nothing to
show in this case.

In the next paragraph and in Lemma \ref{l7} we will make use of
the following fact taken from \cite{Ma1}.

\begin{lem}
\label{11.1} For a positive integer $n$ at least $8$ we have
$${((n/a)!)}^{a}a! \geq {((n/b)!)}^{b}b!$$ whenever $a$ and $b$
are divisors of $n$ with $a \leq b$.
\end{lem}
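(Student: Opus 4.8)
The statement to be proved is Lemma~\ref{11.1}: for $n \geq 8$ and divisors $a \leq b$ of $n$, we have ${((n/a)!)}^{a}a! \geq {((n/b)!)}^{b}b!$. Since divisibility among divisors of $n$ is a partial order refined by the linear order on $\{1,\dots,n\}$, and since any divisor chain can be broken into consecutive steps, the plan is to reduce to a ``one-step'' comparison and then amplify. Concretely, it suffices to prove the inequality in the case where $b = ca$ for a prime $c$ (so that we move from $a$ to $b$ by multiplying by one prime at a time); chaining such steps along a maximal chain of divisors from $a$ to $b$ gives the general case, because each intermediate term is of the form ${((n/d)!)}^{d}d!$ for a divisor $d$ of $n$ with $a \mid d \mid b$.

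So I would fix a divisor $a$ of $n$, write $N = n/a$, and aim to show ${(N!)}^{a} a! \geq {((N/c)!)}^{ca} (ca)!$ whenever $c$ is a prime dividing $N$. Dividing both sides by $a!$ and absorbing, this is equivalent to showing
$$
{(N!)}^{a} \;\geq\; {((N/c)!)}^{ca} \cdot \frac{(ca)!}{a!}.
$$
Taking $a$-th roots, it is enough to prove $N! \geq {((N/c)!)}^{c} \cdot \big((ca)!/a!\big)^{1/a}$. The first factor on the right is the ``same-prime'' quantity one gets by ignoring the $a!$ bookkeeping: indeed $N! / {((N/c)!)}^{c} = \binom{N}{N/c, N/c, \dots, N/c}$ is the multinomial coefficient counting partitions of an $N$-set into $c$ blocks of size $N/c$, hence is an integer that is comfortably large — at least $c^{N/c}$ or so by a standard multinomial lower bound. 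The second factor $\big((ca)!/a!\big)^{1/a}$ is crudely at most $(ca)^{c-1}$ (since $(ca)!/a! = (a+1)(a+2)\cdots(ca)$ has $(c-1)a$ factors each at most $ca$, giving $\le (ca)^{(c-1)a}$, and then the $a$-th root is $\le (ca)^{c-1} \le N^{c-1}$ because $ca \le N$). So the whole thing reduces to the clean polynomial-vs-factorial estimate
$$
\binom{N}{N/c,\dots,N/c} \;\geq\; N^{c-1},
$$
which holds for all $N \geq 8$ and all primes $c \mid N$ with $c \le N$; this is an elementary counting/induction estimate (for $c = 2$ it is $\binom{N}{N/2} \ge N$, true for $N \ge 4$; for larger $c$ the multinomial is only bigger while $N^{c-1}$ with $c \le N/2$ is still dominated).

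\textbf{Main obstacle.} The delicate point is the boundary behaviour: the claim is asserted for $n \geq 8$, and the one-step reduction is cleanest when the smallest relevant divisor is not too small, so I expect the genuine work to be in verifying the small cases $n \in \{8, 9, 10, \dots\}$ directly — in particular the step $a = 1 \to b = 2$, i.e. $(n!)^{1} \cdot 1! \ge ((n/2)!)^{2} \cdot 2!$, equivalently $\binom{n}{n/2} \ge 2$, which is trivially fine, and more pointedly steps like $a=2 \to b=4$ for small $n$ where the polynomial correction factor $(ca)^{c-1}$ is not yet negligible against the multinomial. These are finitely many numerical checks. The other thing to be careful about is making sure the chaining argument is monotone at \emph{every} intermediate divisor, which is exactly why one should pass from $a$ to $b$ one prime factor at a time rather than comparing $a$ and $b$ directly; once the one-prime-step inequality is in hand, transitivity does the rest and no further case analysis on the pair $(a,b)$ is needed.
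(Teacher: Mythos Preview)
The paper gives no proof of this lemma: it is simply quoted as ``a fact taken from \cite{Ma1}'', so there is nothing in the paper to compare your argument against.

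More to the point, your proof cannot be completed because the lemma \emph{as stated} is false. Take $n=16$, $a=4$, $b=8$: then
\[
((n/a)!)^{a}\,a! \;=\; (4!)^{5} \;=\; 7\,962\,624,
\qquad
((n/b)!)^{b}\,b! \;=\; 2^{8}\cdot 8! \;=\; 10\,321\,920,
\]
so the claimed inequality fails. The map $d\mapsto |S_{n/d}\wr S_d|=((n/d)!)^{d}d!$ is \emph{not} monotone decreasing along the divisor lattice once the block size $n/d$ is allowed to drop to~$2$.

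Your argument breaks exactly where it must. The assertion ``$ca\le N$'' (with $N=n/a$) is wrong: already for $n=12$, $a=3$, $c=2$ one has $ca=6>4=N$, and for $n=2^{k}$, $a=2^{k-2}$, $c=2$ the ratio $ca/N=2^{k-3}$ is unbounded. After your $a$-th root step the one-prime-step target reads
\[
\binom{N}{N/c,\ldots,N/c}\;\ge\;\bigl((ca)!/a!\bigr)^{1/a};
\]
in that last family this becomes $\binom{4}{2}=6\ge\bigl((2a)!/a!\bigr)^{1/a}\sim 4a/e$, which fails for $a\ge 4$ --- and $a=4$ is precisely the counterexample above. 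So the gap is not a matter of sharpening constants: the intermediate inequality you set up is itself false in the same range where the lemma fails.

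What the applications in the paper actually require is only the comparison against the \emph{smallest} relevant divisor (for $n$ odd, the smallest prime~$p$), not full monotonicity in $a$. That weaker statement is what one should look for in \cite{Ma1}, and your chaining-by-one-prime idea is better suited to it, since with $a$ fixed at a small prime the problematic regime $ca\gg N$ does not arise.
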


Let $M$ be a maximal imprimitive subgroup of $A_n$ conjugate to $(
S_{n/a} \wr S_{a} ) \cap A_n$ for some proper divisor $a$ of $n$.
Let $n$ be odd (and not a prime). Then $\Pi_{2} \cap K =
\emptyset$ since $M$ does not contain an $(n-2)$-cycle. In this
case
$$|\Pi \cap K| = |\Pi_{1} \cap K| = (1/(2^{m-1}n)){\Big(
{((n/a)!)}^{a}a! \Big)}^{m} \leq (1/(2^{m-1}n)){\Big(
{((n/p)!)}^{p}p! \Big)}^{m}$$ and we are done by part (1) of Lemma
\ref{l8}.

Now let $n$ be even. In this case $a \geq 3$.

\begin{lem}
\label{l2} Let $n$ be even and let $a$ be the smallest divisor of
$n$ larger than $2$. If $n > 10$, then $n {((n/a)!)}^{a}a! \leq 2
{((n/2)!)}^{2}$.
\end{lem}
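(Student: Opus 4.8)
The plan is to prove the inequality $n\,((n/a)!)^{a}\,a! \leq 2\,((n/2)!)^{2}$ for even $n > 10$, where $a$ is the smallest divisor of $n$ that is larger than $2$. The crucial structural observation is that, since $a$ divides $n$ and $a \geq 3$, we automatically have $a \geq 3$ and hence $n/a \leq n/3$; moreover $2$ divides $n$ so $(n/2)!^2 = ((n/2)!)^2$ makes sense and $2 \leq a$. By Lemma \ref{11.1} (applied with the divisor $a \geq 3$ versus the divisor — wait, $2$ need not exceed all relevant thresholds), I should first reduce to the extremal case. Concretely, the right-hand side $2((n/2)!)^2$ is essentially $|S_n|/\binom{n}{n/2}$ up to the factor coming from $S_2$, and the left-hand side is $n$ times $|S_n|/|S_n : (S_{n/a}\wr S_a)|$-type quantity; so after dividing both sides by $n!$ the inequality becomes a clean comparison of multinomial-type coefficients.

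First I would rewrite both sides by dividing through by $n!$. The inequality $n\,((n/a)!)^{a}a! \leq 2((n/2)!)^{2}$ is equivalent to
$$n \leq 2 \cdot \frac{((n/2)!)^{2}}{((n/a)!)^{a}\,a!} = 2 \cdot \frac{\binom{n}{n/2}^{-1}n!}{\,|S_n : (S_{n/a}\wr S_a)|^{-1} n!\,} \cdot (\text{bookkeeping}),$$
so what must be shown is that $2\,((n/2)!)^2$ dominates $n$ times $((n/a)!)^a a!$. The key comparison is between the index $[S_n : S_{n/2}\wr S_2] = \tfrac12\binom{n}{n/2}$ and the index $[S_n : S_{n/a}\wr S_a]$. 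Since $a \geq 3$, Lemma \ref{11.1} does \emph{not} directly apply in the direction I want (it would give $((n/2)!)^2 \cdot 2! \geq ((n/a)!)^a a!$ only if $2 \geq a$, which is false). Instead the correct route is: among all divisors $a$ of $n$ with $a \geq 3$, the quantity $((n/a)!)^a a!$ is, by Lemma \ref{11.1}, \emph{maximized} when $a$ is as small as possible, i.e.\ at the actual $a$ in the statement — so it suffices to bound this worst case. Then I compare $((n/a)!)^a a!$ for the smallest admissible $a \geq 3$ against $((n/2)!)^2$ directly.

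The cleanest way to finish: it suffices to prove $n\,((n/3)!)^{3}\cdot 3! \leq 2((n/2)!)^{2}$ when $3 \mid n$, and to handle separately the case where the smallest divisor of $n$ exceeding $2$ is $4$ (so $n$ is a power of $2$ times an odd number $\geq 5$, or more precisely $n \equiv 4,8 \pmod{12}$ with no factor of $3$) — but in fact one can sidestep casework by noting $a \geq 3$ always gives $((n/a)!)^a a! \leq ((n/3')!)^{3'}(3')!$ for $3' = \min(a,\text{stuff})$; the honest statement is simply: by Lemma \ref{11.1} with $2 \leq a$, we get $((n/a)!)^a a! \leq \big(((n/2)!)^2 \cdot 2!\big)$ is \textbf{false} in general — so I must not invoke it that way. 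The safe approach is direct: set $b = n/2$, $c = n/a$; then $((n/a)!)^a a! = (c!)^{n/c}(n/c)!$, and one shows $n (c!)^{n/c}(n/c)! \leq 2(b!)^2$ by taking logarithms and applying Stirling's formula (Theorem \ref{Stirling}), using $c = n/a \leq n/3$. Under $\log$, after Stirling the dominant terms are $\tfrac{n}{c}\cdot c\log(c/e) = n\log(c/e)$ versus $2b\log(b/e) = n\log((n/2)/e)$, and since $c \leq n/3 < n/2$ the right side wins by a margin growing like $n\log(3/2)$, which easily absorbs the lower-order $\log n$, $\log 2$, and the $\binom{n}{n/c}$-type polynomial factors once $n > 10$; a few small even $n$ (namely $n \in \{12\}$ and any others the asymptotic estimate misses) are then checked by hand.

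The main obstacle I anticipate is not the asymptotics — those are comfortable because of the genuine constant-factor gap between $n/3$ and $n/2$ inside the logarithm — but rather the low-dimensional boundary: making the Stirling estimate explicit enough to be rigorous down to $n = 12$ without an unpleasant thicket of constants. I would control this by keeping the Stirling bounds in the crude form $\sqrt{2\pi n}(n/e)^n < n! < 2\sqrt{2\pi n}(n/e)^n$, reducing the target inequality to an elementary inequality in $n$ of the shape $n^{O(1)} \leq (3/2)^{cn}$ for an explicit constant $c$, verifying it holds for all $n \geq n_0$ for a modest $n_0$, and then dispatching the finitely many even $n$ with $10 < n < n_0$ by direct computation (using $((n/a)!)^a a!$ with the actual smallest divisor $a > 2$ of each such $n$).
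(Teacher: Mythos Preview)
Your overall strategy --- Stirling's formula for large $n$, direct verification for small $n$ --- is exactly what the paper does, and the reduction to comparing $n\log(c/e)$ against $n\log((n/2)/e)$ is the right main term. But there is a genuine gap in the Stirling step: you treat $(n/c)! = a!$ as one of the ``lower-order $\log n$, $\log 2$, and $\binom{n}{n/c}$-type polynomial factors'', and this is not justified. The quantity $a$ is the smallest divisor of $n$ exceeding $2$, and nothing forces $a$ to be bounded: if $n = 2p$ with $p$ an odd prime (infinitely many such even $n$), then $a = p = n/2$, so $a! = (n/2)!$ and $\log(a!) \sim (n/2)\log((n/2)/e)$. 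This is of the same order as your main terms and swamps the margin $n\log(3/2)$ you claim suffices. Your asymptotic sketch, as written, simply does not cover this family.

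The paper closes exactly this hole by splitting off the case $a = n/2$ at the outset: there the inequality reduces to $2^{a} \leq (a-1)!$, which holds for $a > 5$ (hence $n > 10$). One then observes that any divisor $a$ of an even $n$ with $2 < a < n/2$ must in fact satisfy $a \leq n/4$ (since $n/a \in \{2,3\}$ would force $a = n/2$ or $3 \mid n$ with $a=3$). With $3 \leq a \leq n/4$ in hand, the $a!$ term really is subordinate to the $n\log(a/2)$ margin, the Stirling argument goes through cleanly for $n \geq 30$, and the remaining range $10 < n \leq 28$ is checked directly. Your proposal can be repaired along the same lines, but as it stands the treatment of the $a!$ factor is the missing idea.
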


\begin{proof}
If $n=2a$, then we must consider the inequality $2^{a} \leq
(a-1)!$. This is clearly true if $a$ satisfies $a > 5$, hence if
$n
> 10$. This means that we may assume that $3 \leq a \leq n/4$.

The lemma is true for $10 < n \leq 28$ by inspection. From now on
we assume that $n \geq 30$.

Applying Stirling's formula (see Theorem \ref{Stirling}), we see
that it is sufficient to verify the inequality
$$n {(\sqrt{2 \pi (n/a)})}^{a} {(n/ae)}^{n} e^{a^{2}/(12n)}
\sqrt{2\pi a} {(a/e)}^{a} e^{1/(12a)} \leq 2 \pi n {(n/2e)}^{n}
e^{2/(6n+1)}.$$ After rearranging factors we obtain
$$2^{n} {(2\pi(n/a))}^{a/2} e^{a^{2}/(12n)} \sqrt{2 \pi a} {(a/e)}^{a} e^{1/(12a)}
\leq a^{n} 2 \pi e^{2/(6n+1)}.$$ After taking natural logarithms
and rearranging terms we obtain
$$a \Big( \frac{\ln (2\pi)}{2} + \frac{\ln n}{2} + \frac{\ln a}{2} + \frac{a}{12n} -1 \Big)
+ \Big( \frac{\ln a}{2} + \frac{1}{12a} - \frac{\ln (2\pi)}{2} -
\frac{2}{6n+1} \Big) \leq n (\ln a - \ln 2).$$ By the assumption
$3 \leq a \leq n/4$ and by dividing both sides of the previous
inequality by $\ln n$ we see that it is sufficient to prove $$a
\Big( 1+ \frac{\ln (2 \pi)}{2 \ln n} + \frac{1}{48 \ln n} -
\frac{1}{\ln n} \Big) + \Big( \frac{1}{2} + \frac{1}{36 \ln n} -
\frac{\ln (2\pi)}{2 \ln n} - \frac{2}{(6n+1)\ln n} \Big) \leq
\frac{n}{\ln n} (\ln a - \ln 2).$$ Since $$\frac{\ln (2 \pi)}{2
\ln n} + \frac{1}{48 \ln n} - \frac{1}{\ln n} < 0$$ and
$$\frac{1}{36 \ln n} - \frac{\ln (2\pi)}{2 \ln n} -
\frac{2}{(6n+1)\ln n} < 0,$$ it is sufficient to prove
\begin{equation}
\label{e2} \frac{a+ 0.5}{\ln a - \ln 2} \leq \frac{n}{\ln n}.
\end{equation}
This is true for $a = 3$, $4$, and $5$ (provided that $n \geq
30$). Hence assume that $7 \leq a \leq n/4$.

The function $\frac{x+ 0.5}{\ln x - \ln 2}$ increases when $x >
6$, hence it is sufficient to show inequality (\ref{e2}) in case
of the substitution $a = n/4$. But that holds for $n \geq 30$. The
proof of the lemma is now complete.
\end{proof}

By Lemma \ref{lb}, we have $|\Pi \cap K| \leq (1 + \alpha(m))
{|M|}^{m}$. The left-hand sides of Lemmas \ref{l7} and \ref{11.4}
are upper bounds for $(1 + \alpha(m)) {|M|}^{m}$ in various cases.

\begin{lem}
\label{l7} Let $n$ be even and let $a$ be the smallest divisor of
$n$ larger than $2$. Let $m \geq 2$. Then for $n > 10$ we have the
following.
\begin{enumerate}
\item If $n$ is divisible by $4$, then $$(1 + \alpha(m) ){\Big(
\frac{{((n/a)!)}^{a}a!}{2} \Big)}^{m} \leq (((n/2)-2)!)((n/2)!)
{\Big( \frac{(((n/2)-1)!)(((n/2)+1)!)}{2} \Big)}^{m-1}.$$

\item If $n$ is congruent to $2$ modulo $4$, then $$(1 + \alpha(m)
){\Big( \frac{{((n/a)!)}^{a}a!}{2} \Big)}^{m} \leq (1/2^{m-1})
{(((n/2)-1)!)}^{2} {((n/2)!)}^{2m-2}.$$
\end{enumerate}
\end{lem}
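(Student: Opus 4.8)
The plan is to establish the two displayed inequalities directly; combined with the bound $|\Pi\cap K|\le(1+\alpha(m))|M|^{m}$ noted just above and with Lemma \ref{l8}, they give Condition (4) of Definition \ref{d1} for a $K$ of this type. In each case I would take $m$-th roots, peel off the dependence on $m$, and reduce to an elementary inequality in $n$ alone; the engine is the sharp bound $n\,{((n/a)!)}^{a}a!\le 2\,{((n/2)!)}^{2}$ of Lemma \ref{l2} (which already absorbs the awkward case $n=2a$ via $2^{a}\le(a-1)!$). Write $A={((n/a)!)}^{a}a!$ and $B=(n/2)!$, and record $(n/2-1)!=\tfrac{2}{n}B$, $(n/2-2)!=\tfrac{4}{n(n-2)}B$, $(n/2+1)!=\tfrac{n+2}{2}B$. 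Substituting, the right-hand side of part (1) becomes $\tfrac{4(n+2)^{m-1}}{n(n-2)(2n)^{m-1}}B^{2m}$ and that of part (2) becomes $\tfrac{4}{2^{m-1}n^{2}}B^{2m}$, while Lemma \ref{l2} gives $A^{m}\le(2B^{2}/n)^{m}$, so the left-hand side of either inequality is at most $(1+\alpha(m))B^{2m}/n^{m}$. The factor $B^{2m}$ cancels, and using $(1+\alpha(m))^{1/m}\le\sqrt2$ (valid for $m\ge2$) the two inequalities reduce respectively to
$$(1+\alpha(m))(n-2)2^{m-1}\le 4(n+2)^{m-1}\qquad\text{and}\qquad(1+\alpha(m))2^{m-1}\le 4n^{m-2}.$$

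For $m=2$ the first of these reads $4(n-2)\le 4(n+2)$ and the second reads $4\le 4$; in fact, at $m=2$ the inequality of part (2) is, after the cancellations above, literally the content of Lemma \ref{l2}, so there is no slack there at all and one genuinely needs Lemma \ref{l2} in its sharp form (both the constant $2$ and the factor $n$). For $m\ge3$ one bounds $1+\alpha(m)\le 2^{m/2}$, so the left-hand sides are at most $2^{3m/2-1}(n-2)$ and $2^{3m/2-1}$; since $(n+2)^{m-1}\ge(n-2)n^{m-2}$, dividing through (by $4(n-2)$, respectively by $4$) reduces each to $2^{3m/2-3}\le n^{m-2}$, i.e. $(2\sqrt2)^{m-2}\le n^{m-2}$, which holds because $2\sqrt2<12\le n$ in case (1) and $2\sqrt2<14\le n$ in case (2).

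The only real work is keeping the constants straight through the factorial substitutions; the whole statement rests on Lemma \ref{l2}, and the only subtlety — the mild obstacle — is noticing that it must be applied in its sharp form, since the $m=2$ instance of part (2) is tight, whereas for $m\ge3$ the extra factor growing like $n^{m-2}$ on the right makes the argument routine. No separate treatment of $n$ twice a prime is needed here, as that case is already folded into Lemma \ref{l2}.
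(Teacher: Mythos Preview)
Your proof is correct and follows essentially the same approach as the paper: both arguments substitute the factorial identities to express everything in terms of $B=(n/2)!$, invoke Lemma \ref{l2} to bound $A=((n/a)!)^{a}a!$, use $(1+\alpha(m))^{1/m}\le\sqrt{2}$, and reduce to an elementary inequality in $n$. The only cosmetic difference is in the last step: the paper takes $m$-th roots and uses the monotonicity $(8/(n^{2}-4))^{1/m}\ge(8/(n^{2}-4))^{1/2}\ge 2\sqrt{2}/n$ to handle all $m\ge 2$ uniformly (reducing part (1) directly to $nA\le 2((n/2)-1)!((n/2)+1)!$, which follows from Lemma \ref{l2} and $((n/2)!)^{2}<((n/2)-1)!((n/2)+1)!$), whereas you split into $m=2$ and $m\ge 3$ and verify each separately.
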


\begin{proof}
By Lemma \ref{l2} it is sufficient to show that both displayed
inequalities follow from the inequality
$$n {((n/a)!)}^{a}a! \leq 2 {((n/2)!)}^{2}.$$

Indeed, the first displayed inequality becomes
$$(1+\alpha(m)) {\Big(
\frac{{((n/a)!)}^{a}a!}{2} \Big)}^{m} \leq \frac{8}{n^{2}-4}
{\Big( \frac{(((n/2)-1)!)(((n/2)+1)!)}{2} \Big)}^{m}.$$ Since
${(1+\alpha(m))}^{1/m} \leq \sqrt{2}$ and $(2\sqrt{2})/n \leq
{(8/(n^{2}-4))}^{1/2} \leq {(8/(n^{2}-4))}^{1/m}$, it is
sufficient to see that
$$(n/2){((n/a)!)}^{a}a! \leq ((n/2)-1)!((n/2)+1)!.$$ But this
proves the first part of the lemma since $${((n/2)!)}^{2} <
((n/2)-1)!((n/2)+1)!.$$

After rearranging the factors in the second displayed inequality
of the statement of the lemma, we get
$$(1+\alpha(m)){\Big( {((n/a)!)}^{a}a! \Big)}^{m} \leq (8/n^{2}) {(n/2)!}^{2m}.$$
By similar considerations as in the previous paragraph, we see
that this latter inequality follows from the inequality $n
{((n/a)!)}^{a}a! \leq 2 {((n/2)!)}^{2}$.
\end{proof}

Now let $M$ be a maximal primitive subgroup of $A_n$. We know that
$|M| < 2.6^{n}$ by \cite{Ma1}. The following lemma is necessary
for our purposes.

\begin{lem}
\label{11.4} For $n > 12$ and $m \geq 2$ we have the following.
\begin{enumerate}
\item Let $n$ be odd with smallest prime divisor $p$ at most
$\sqrt[3]{n}$. Then
$$(1+ \alpha(m))2.6^{nm} \leq (1/(2^{m-1}n)) {\Big( {(n/p)!}^{p}p!
\Big)}^{m}.$$

\item If $n$ is divisible by $4$, then $$(1+ \alpha(m) )2.6^{nm}
\leq (((n/2)-2)!)((n/2)!) {\Big(
\frac{(((n/2)-1)!)(((n/2)+1)!)}{2} \Big)}^{m-1}.$$

\item If $n$ is congruent to $2$ modulo $4$, then
$$(1+ \alpha(m) )2.6^{nm} \leq (1/2^{m-1}){(((n/2)-1)!)}^{2}
{((n/2)!)}^{2m-2}.$$
\end{enumerate}
\end{lem}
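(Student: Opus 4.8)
The plan is to proceed exactly as in the proofs of Lemma \ref{l2} and Lemma \ref{l7}: estimate everything with Stirling's formula (Theorem \ref{Stirling}), reduce each of the three displayed inequalities to a clean inequality in $n$ alone, and verify that the tail inequality holds for all large $n$, checking the finitely many small cases by inspection. Throughout I would use the two trivial bounds ${(1+\alpha(m))}^{1/m} \leq \sqrt{2}$ and, when the exponent on the left is $nm$ rather than $m/2$, that the factor $1+\alpha(m)$ is negligible compared to the exponential growth on the right. In all three parts the right-hand side grows like $n!$ raised to roughly $m$ (more precisely like ${(n/p)!}^{pm}$, resp. ${((n/2)!)}^{2m}$, up to polynomial factors), while the left-hand side grows only like $2.6^{nm}$; since $n! / 2.6^{n} \to \infty$ superexponentially (by Stirling, $\log(n!) \sim n\log n$ dominates $n \log 2.6$), the inequalities are true with enormous room to spare once $n$ is moderately large.

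For part (1), after taking $m$-th roots the inequality becomes ${(1+\alpha(m))}^{1/m} 2.6^{n} \leq {(1/(2^{m-1}n))}^{1/m}{(n/p)!}^{p}p!$; since $p \leq \sqrt[3]{n}$ we may replace $p$ by $n^{1/3}$ to make the right-hand side smaller, and $(1/(2^{m-1}n))^{1/m} \geq 1/(2n)$ suffices to handle the polynomial nuisance factor (for $m \geq 2$). Substituting Stirling's lower bound for $(n^{2/3})!$ and raising to the power $n^{1/3}$, the right-hand side is bounded below by something of the form ${(n^{2/3}/e)}^{n}$ times lower-order terms, so it is enough to prove an inequality of the shape $\sqrt{2}\,(2.6\,e)^{n} \leq c\, n^{(1/3)n}$ for a constant $c$, which holds for all $n \geq n_{0}$ with a small explicit $n_{0}$ (comparable to the $n \geq 27$ appearing in Lemma \ref{l1}); the remaining $13 < n < n_{0}$ with $p \mid n$, $p \leq \sqrt[3]{n}$ are checked directly. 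For parts (2) and (3) the same strategy applies, and in fact these are easier: one can bootstrap from the already-proved Lemmas \ref{l1} and \ref{l7}. Indeed, comparing the right-hand sides, it suffices to show $(1+\alpha(m))2.6^{nm}$ is at most the corresponding right-hand side, and since in Lemma \ref{l1}(2),(3) the left-hand side $(1+\alpha(m))(n!/2)^{m/2}$ already dominates $2.6^{nm}$ once $n! \gtrsim 2.6^{2n}$, i.e. for $n$ past another small bound, the whole claim follows from Lemma \ref{l1} together with a finite check; alternatively one redoes the Stirling computation from scratch, reducing (2) to $(n^{2}-4)(2.6\,e)^{n} \lesssim {((n/2)!)}^{2}/{(n/2)}^{n}$ type bounds and (3) similarly.

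The main obstacle, such as it is, is bookkeeping rather than mathematics: one must be careful that the bound $|M| < 2.6^{n}$ from \cite{Ma1} for primitive maximal $M$ is indeed applicable (it is, since we are in the case "$M$ a maximal primitive subgroup of $A_{n}$", and the excluded intransitive and imprimitive cases were handled in the previous two sections), and one must track the polynomial factors $n$, $2^{m-1}$, $n^{2}$, $(n/2)^{4/m}$ carefully enough that they are absorbed without spoiling the small-$n$ base cases. I would organize the write-up as three short paragraphs mirroring Lemma \ref{l1}, in each one taking $m$-th roots, invoking ${(1+\alpha(m))}^{1/m} \leq \sqrt{2}$, substituting Stirling, reducing to an inequality purely in $n$ of exponential-versus-$n\log n$ type, noting it holds for $n$ beyond an explicit threshold, and dispatching the finitely many remaining $n > 12$ by direct inspection. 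Since $2.6 < e < 3$ and $n!$ beats $3^{n}$ already for $n \geq 7$, every threshold here will be small, certainly well below any genuine difficulty.
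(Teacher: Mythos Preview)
Your approach is correct and, for parts (2) and (3), is exactly what the paper does: bootstrap from Lemma~\ref{l1} via the observation that $(1+\alpha(m))2.6^{nm} < (1+\alpha(m))(n!/2)^{m/2}$ once $2.6^{2n} < n!/2$, which holds for all $n \geq 17$, and then check the remaining small cases by hand.

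The one place you make more work for yourself than necessary is part (1). You propose a fresh Stirling computation there, but the paper applies the \emph{same} bootstrap from Lemma~\ref{l1}(1) uniformly to all three parts. The point you may have overlooked is that the hypothesis in part (1) --- $n$ odd with smallest prime divisor $p \leq \sqrt[3]{n}$ --- already forces $n \geq 27$ (since $p \geq 3$ implies $n \geq 27$), so there are no small cases to check in part (1) at all. Thus the paper's entire proof is two lines: invoke Lemma~\ref{l1} for $n \geq 17$, and verify $n = 14$ (for part (3)) and $n = 16$ (for part (2)) directly. Your direct Stirling argument for (1) would of course also work, but it is redundant.
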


\begin{proof}
By Lemma \ref{l1}, there is nothing to prove for $n \geq 17$ since
$$(1+ \alpha(m) )2.6^{nm} < (1+ \alpha(m) ) {(n!/2)}^{m/2}$$ holds
for $n \geq 17$. One can check the validity of the inequalities
for $n = 16$ and $n=14$ by hand.
\end{proof}

Putting together the results of the previous three sections, the
proof of Proposition \ref{p2} is complete by Lemma \ref{folklore}.

\section{A lower bound for $\sigma(A_{n} \wr C_{m})$}

In this section we show that if $n > 12$ and $n$ is not congruent
to $2$ modulo $4$, then
$$\alpha(m) + \frac{1}{2} \overset{n}{\underset{i \;
\mathrm{odd}}{\underset{i=1}{\sum}}} {{n \choose i}}^{m} <
\sigma(A_{n} \wr C_{m}).$$

To show this for $n$ divisible by $4$ and $n > 12$, notice that


$$\alpha(m) + \frac{1}{2} \overset{n}{\underset{i \;
\mathrm{odd}}{\underset{i=1}{\sum}}} {{n \choose i}}^{m} =
\sigma(\Pi) \leq \sigma(A_{n} \wr C_{m}).$$

Let $n > 12$ be odd. By \cite{Ma2} we may assume that $m > 1$. In
this case we clearly have
$$\alpha(m) + \frac{1}{2} \overset{n}{\underset{i \;
\mathrm{odd}}{\underset{i=1}{\sum}}} {{n \choose i}}^{m} <
2^{nm-m-1}.$$ Hence it is sufficient to show that $2^{nm-m-1} \leq
\sigma(A_{n} \wr C_{m})$.

We have $|\Pi_{1}| = (n-1)!{(n!/2)}^{m-1}$. Let $H = N_{G}(M
\times M^{g_{2}} \times \cdots \times M^{g_{m}})$ for some maximal
subgroup $M$ of $A_n$ and some elements $g_{2}, \ldots, g_{m} \in
A_n$. If $M$ is intransitive, then $\Pi_{1} \cap H = \emptyset$.
If $M$ is imprimitive, then, by Lemma \ref{11.1}, $$|\Pi_{1} \cap
H| \leq (1/(n2^{m-1})){(n/p)!}^{mp}{p!}^{m}$$ where $p$ is the
smallest prime divisor of $n$. If $M$ is primitive, then, by the
statement just before Lemma \ref{11.4}, $|\Pi_{1} \cap H| \leq
2.6^{nm}$. Now let $H$ be a subgroup of $G$ of diagonal type. Then
$|\Pi_{1} \cap H| \leq {(n!/2)}^{m/2}$. If $H$ is a maximal
subgroup of $G$ containing the socle of $G$, then $\Pi_{1} \cap H
= \emptyset$. Let $\mathcal{M}$ be a minimal cover (a cover with
least number of members) of $G$ containing maximal subgroups of
$G$. Let $a$ be the number of subgroups in $\mathcal{M}$ of the
form $N_{G}(M \times M^{g_{2}} \times \cdots \times M^{g_{m}})$
where $M$ is imprimitive. Let $b$ be the number of subgroups in
$\mathcal{M}$ of the form $N_{G}(M \times M^{g_{2}} \times \cdots
\times M^{g_{m}})$ where $M$ is primitive. Let $c$ be the number
of subgroups in $\mathcal{M}$ of diagonal type. Then $$a \cdot
(1/(n2^{m-1})){(n/p)!}^{mp}{p!}^{m} + b \cdot 2.6^{nm} + c \cdot
{(n!/2)}^{m/2} \geq (n-1)!{(n!/2)}^{m-1}.$$ From this we see that
$$\frac{(n-1)!{(n!/2)}^{m-1}}{\max \{
(1/(n2^{m-1})){(n/p)!}^{mp}{p!}^{m}, 2.6^{nm}, {(n!/2)}^{m/2} \}}
\leq \sigma(G)$$ if $n$ is not a prime, and
$$\frac{(n-1)!{(n!/2)}^{m-1}}{\max \{ 2.6^{nm},
{(n!/2)}^{m/2} \}} \leq \sigma(G)$$ if $n$ is a prime. Hence to
finish the proof of this section, it is sufficient to see

\begin{lem}
For $n > 12$ odd and for $m >1$ we have the following.
\begin{enumerate}
\item $$2^{nm-m-1} \leq \frac{(n!)^{m}}{{(n/p)!}^{mp}{p!}^{m}}$$
where $n$ is not a prime and $p$ is the smallest prime divisor of
$n$.

\item $$2^{nm-2} \leq \frac{(n-1)!{(n!)}^{m-1}}{2.6^{nm}}.$$

\item $$2^{nm-(m/2)-2} \leq (n-1)!{(n!)}^{(m/2)-1}.$$
\end{enumerate}
\end{lem}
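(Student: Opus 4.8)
The plan is to treat the three inequalities in parallel. In each case I would first show that the ratio of the two sides is a decreasing function of $m$, reducing matters to the boundary case $m=2$, and then reduce $m=2$ to an elementary comparison of a factorial with an exponential.

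For part (2), set $h(m)=2^{nm-2}(2.6)^{nm}\big/\big((n-1)!\,(n!)^{m-1}\big)$, viewed as a function of a real variable $m\ge 2$. Then $\frac{d}{dm}\ln h(m)=n\ln 2+n\ln(2.6)-\ln(n!)=n\ln(5.2)-\ln(n!)$, which is negative for $n\ge 13$ because $(5.2)^{13}<13!$ and the difference $\ln(n!)-n\ln(5.2)$ increases with $n$. Hence $h$ is decreasing, so it suffices to verify $h(2)\le 1$, i.e.\ $2^{2n-2}(2.6)^{2n}\le (n-1)!\,n!=(n!)^2/n$; taking square roots this reads $(\sqrt n/2)(5.2)^n\le n!$, which holds at $n=13$ and persists for all larger $n$, the inductive step being the elementary inequality $5.2\sqrt{1+1/n}\le n+1$. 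Part (3) is handled in exactly the same way: with $h_3(m)=2^{nm-m/2-2}\big/\big((n-1)!\,(n!)^{m/2-1}\big)$ one computes $\frac{d}{dm}\ln h_3(m)=(n-\tfrac12)\ln 2-\tfrac12\ln(n!)$, negative once $2^{2n-1}<n!$ (valid for $n\ge 9$), so again only $m=2$ needs attention, where the inequality collapses to $2^{2n-3}\le(n-1)!$, plainly true for $n>12$.

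For part (1), note that the quantity $(n!)^m/((n/p)!^{\,mp}(p!)^m)$ equals $\big(n!/((n/p)!^{\,p}\,p!)\big)^{m}=|S_n:S_{n/p}\wr S_p|^{\,m}$, so it is enough to prove the single inequality $n!/((n/p)!^{\,p}\,p!)\ge 2^{n-1}$; raising to the $m$-th power then yields at least $2^{(n-1)m}=2^{nm-m}\ge 2^{nm-m-1}$. In fact I would prove, for every prime $q\ge 3$ and every multiple $n$ of $q$ with $n\ge q^{2}$, that $n!/((n/q)!^{\,q}\,q!)\ge 2^{n-1}$; this gives what is needed, since our $n$ (odd and composite) is divisible by its least prime $p$ and satisfies $n\ge p^{2}$. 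Writing $R_n=n!/((n/q)!^{\,q}\,q!)$ and proceeding by induction in steps of $q$, from
$$\frac{R_{n+q}}{R_n}=\frac{q\prod_{i=1}^{q-1}(n+i)}{\big((n/q)+1\big)^{\,q-1}}\;>\;q^{\,q}\Big(\frac{n/q}{(n/q)+1}\Big)^{q-1}\;\ge\;q^{\,q}\Big(\frac{q}{q+1}\Big)^{q-1}\;>\;\frac{q^{\,q}}{e}\;\ge\;2^{\,q}$$
(the last step because $(q/2)^{q}\ge(3/2)^{3}>e$ for $q\ge 3$) the inductive step reproduces itself. It remains to verify the base case $n=q^{2}$, which one does by hand for $q=3$ (where $R_{9}=280\ge 256=2^{8}$), $q=5$ and $q=7$, and for $q\ge 11$ via Stirling's formula (Theorem \ref{Stirling}), which shows that $\ln R_{q^{2}}$ is dominated by a term of size $q^{2}\ln q$ and hence exceeds $(q^{2}-1)\ln 2$.

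The only step I expect to require real care is this last one in part (1): squeezing out of Stirling's formula a lower bound for $R_n$ — equivalently for the index $|S_n:S_{n/p}\wr S_p|$ — that is uniform in the prime $p$, and then clearing the base cases. The inequality is very slack for $q\ge 5$ but essentially tight at the checkpoint $q=3$, $n=9$, where $280$ barely beats $256$. Parts (2) and (3) present no real obstacle once the reduction to $m=2$ is in place, since the surviving inequalities $(\sqrt n/2)(5.2)^{n}\le n!$ and $2^{2n-3}\le(n-1)!$ hold with a healthy margin for all $n>12$.
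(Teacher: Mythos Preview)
Your proposal is correct. For parts (2) and (3) you arrive at exactly the same elementary inequalities as the paper --- $(\sqrt{n}/2)\,5.2^{n}\le n!$ and (equivalently) $(n/8)\,4^{n}\le n!$ --- the only cosmetic difference being that you reduce to $m=2$ by differentiating $\ln h(m)$, whereas the paper uses the equivalent observation $(n/4)^{1/m}\le\sqrt{n}/2$ for $m\ge 2$.

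For part (1) the two proofs diverge after the common reduction to $2^{n-1}\le n!/((n/p)!^{p}p!)$. The paper verifies this inequality directly, checking all odd composite $n$ with $15\le n<99$ by inspection and then, for $n\ge 99$, applying Stirling's formula together with the crude bound $3\le p\le\sqrt{n}$. Your route is cleaner: fixing the prime $q$, you run an induction in steps of $q$ over all multiples $n\ge q^{2}$, the ratio estimate $R_{n+q}/R_n>q^{q}/e\ge 2^{q}$ doing the work; only the anchors $n=q^{2}$ remain, and those are either tiny ($q=3,5,7$) or comfortably handled by Stirling ($q\ge 11$). This buys you far fewer hand checks than the paper's $15\le n<99$ sweep, at the cost of a short extra argument for the inductive step. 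Either approach is straightforward; yours is the tidier of the two.
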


\begin{proof}
(1) It is sufficient to prove the inequality $$2^{n-1} \leq
\frac{n!}{{(n/p)!}^{p}p!}$$ for $n \geq 15$. This is true by
inspection for $15 \leq n < 99$. Hence assume that $n \geq 99$.
Applying Stirling's formula (see Theorem \ref{Stirling}) three
times to both sides of the inequality $$2^{n-1}{(n/p)!}^{p}p! \leq
n!$$ we obtain $$2^{n-1}{\sqrt{2\pi (n/p)}}^{p} {(n/pe)}^{n}
e^{1/(12(n/p))} \sqrt{2 \pi p} {(p/e)}^{p} e^{1/12p} \leq
\sqrt{2\pi n}{(n/e)}^{n}e^{1/(12n+1)}.$$ Since
$e^{1/(12(n/p))}e^{1/12p} < 2$ and $e^{1/(12n+1)} > 1$, it is
sufficient to prove the inequality $$2^{n}{\sqrt{2\pi (n/p)}}^{p}
{(n/pe)}^{n}\sqrt{2 \pi p} {(p/e)}^{p} \leq \sqrt{2\pi
n}{(n/e)}^{n}.$$ After rearranging factors and applying the
estimate $3 \leq p \leq \sqrt{n}$ we see that it is sufficient to
prove $$2^{n} {\sqrt{2\pi n /3}}^{\sqrt{n}} \sqrt{2\pi \sqrt{n}}
{(\sqrt{n}/e)}^{\sqrt{n}} \leq 3^{n} \sqrt{2 \pi n}.$$ After
taking logarithms of both sides of the previous inequality and
rearranging terms, we get
$$(\sqrt{n}/2) \ln (2\pi n/3) + (1/2)\ln (2\pi \sqrt{n}) +
\sqrt{n} \ln (\sqrt{n}/e) \leq n \ln (3/2) + (1/2)\ln (2\pi n).$$
After further rearrangements we obtain
$$(\sqrt{n}-(1/4)) \ln n \leq \ln (3/2) n + \sqrt{n} (1- (\ln (2\pi/3)/2)).$$
After dividing both sides of the previous inequality by $\sqrt{n}$
and evaluating the logarithms we see that it is sufficient to
prove $\ln n \leq 0.4 \sqrt{n} + 0.63$ for $n \geq 99$. But this
is clearly true.

(2) Rearranging the inequality we get $(n/4)2^{nm} \leq
{(n!)}^{m}/2.6^{nm}$. Hence it is sufficient to see that
$(\sqrt{n}/2)5.2^{n} \leq n!$. But this is true for $n \geq 13$.

(3) Rearranging the inequality we get $(n/4)2^{nm-(m/2)} \leq
{(n!)}^{m/2}$. Hence it is sufficient to see that $(n/8)4^{n}\leq
n!$. But this is true for $n \geq 13$.
\end{proof}

\section{Proofs of Theorems \ref{main2} and \ref{main3}}

Let us first show Theorem \ref{main2}. Suppose that $n$ is
congruent to $2$ modulo $4$. If $n \geq 10$, then $\sigma(A_{n}) =
2^{n-2}$, by \cite{Ma2}. Hence we may assume that $m > 1$ (and $n
> 10$). In this case, by Proposition \ref{p2}, $\mathcal{H}$ is
definitely unbeatable on $\Pi$ and $\mathcal{H}_{0}$ is a covering
for $A_n$. Hence
$$\alpha(m) + \sum_{M \in \mathcal{H}_{0}} {|A_{n}:M|}^{m-1} =
|\mathcal{H}| = \sigma(\Pi) \leq \sigma(G) \leq \alpha(m) +
\sum_{M \in \mathcal{H}_{0}} {|A_{n}:M|}^{m-1},$$ by Proposition
\ref{bdsn}. Finally, it is easy to see that $$\alpha(m) + \sum_{M
\in \mathcal{H}_{0}} {|A_{n}:M|}^{m-1} = \alpha(m) +
\overset{(n/2)-2}{\underset{i \;
\mathrm{odd}}{\underset{i=1}{\sum}}} {{n \choose i}}^{m} +
\frac{1}{2^{m}} {{n \choose n/2}}^{m}.$$ This (and the previous
section) proves Theorem \ref{main2}.

>From now on assume that $n$ is either at least $16$ and divisible
by $4$ or odd with a prime divisor at most $\sqrt[3]{n}$. In this
case $\mathcal{H}$ is definitely unbeatable on $\Pi$ by
Proposition \ref{p2}. This gives us the lower bound
$$\alpha(m) + \sum_{M \in \mathcal{H}_{0}} {|A_{n}:M|}^{m-1} \leq \sigma(G).$$
Let the set $\mathcal{H}_{3}$ of maximal subgroups of $A_n$ be
defined as follows. If $4$ divides $n$, then let $\mathcal{H}_{3}$
be the set of all subgroups conjugate (in $A_n$) to $(S_{n/2} \wr
S_{2}) \cap A_n$. If $n$ is odd, then let $\mathcal{H}_{3}$ be the
set of all subgroups conjugate (in $A_n$) to some subgroup $(S_{k}
\times S_{n-k}) \cap A_n$ for some $k$ with $k \leq n/3$. Then
$\mathcal{H}_{0} \cup \mathcal{H}_{3}$ is a covering for $A_n$.
Hence, by Proposition \ref{bdsn}, this gives us the upper bound
$$\sigma(G) \leq \alpha(m) + \sum_{M \in \mathcal{H}_{0}} {|A_{n}:M|}^{m-1} +
\sum_{M \in \mathcal{H}_{3}} {|A_{n}:M|}^{m-1}.$$ Hence to prove
Theorem \ref{main3}, it is sufficient to see that the fraction
$$f(n,m) = \frac{ \sum_{M \in \mathcal{H}_{3}} {|A_{n}:M|}^{m-1} }
{ \sum_{M \in \mathcal{H}_{0}} {|A_{n}:M|}^{m-1}}$$ tends to $0$
as $n$ goes to infinity.

If $n$ is divisible by $4$, then $$f(n,m) = \frac{{((1/2){n
\choose n/2})}^{m}}{(1/2) \overset{n}{\underset{i
\mathrm{odd}}{\underset{i=1}{\sum}}} {n \choose i}^{m}}$$ which
clearly tends to $0$ as $n$ goes to infinity.

Finally, if $n$ is odd with smallest prime divisor $p$ at most
$\sqrt[3]{n}$, then
$$f(n,m) = \frac{\sum_{i=1}^{[n/3]}{n \choose i}^{m} }{{(n!/({(n/p)!}^{p}p!))}^{m}}
\leq \frac{\sum_{i=1}^{[n/3]}{n \choose i}^{m} }{ 2^{nm-m} } \leq
\frac{{\Big( \sum_{i=1}^{[n/3]}{n \choose i} \Big)}^{m} }{
2^{nm-m} }$$ which again tends to $0$ as $n$ goes to infinity.

This proves Theorem \ref{main3}.

\bigskip

\centerline{\bf Acknowledgements \rm}

\medskip

Thanks are due to Andrea Lucchini for helpful comments and to the
anonymous referee for a careful reading of an early version of
this paper.

\medskip

\bigskip

{\it Martino Garonzi, Dipartimento di Matematica Pura ed
Applicata, Via Trieste

63, 35121 Padova, Italy.

E-mail address: mgaronzi@math.unipd.it}

\medskip

{\it Attila Mar\'oti, Alfr\'ed R\'enyi Institute of Mathematics,
Budapest, Hungary.

E-mail address: maroti@renyi.hu}


\begin{thebibliography}{30}


\bibitem{BEGHM} Britnell, J. R.; Evseev, A; Guralnick, R. M.; Holmes, P. E.;
Mar\'oti, A. Sets of elements that pairwise generate a linear
group. \emph{J. Combin. Theory Ser. A.} \textbf{115} (2008), no.
3, 442-465.

\bibitem{BFS} Bryce, R. A.; Fedri, V.; Serena, L. Subgroup coverings
of some linear groups. \emph{Bull. Austral Math. Soc.}
\textbf{60}, (1999), no. 2, 227-238.

\bibitem{C} Cohn, J. H. E. On $n$-sum groups. \emph{Math. Scand.} \textbf{75},
(1994), no. 1, 44-58.

\bibitem{Dickson} Dickson, L. E. Linear groups: With an exposition of the Galois field theory.
\emph{Dover Publication Inc.} New York, 1958.

\bibitem{DL} Detomi, E.; Lucchini, A. On the structure of
primitive $n$-sum groups. \emph{Cubo} \textbf{10} (2008), no. 3,
195-210.

\bibitem{GAP} The {\sf GAP}~Group, \emph{{\sf GAP} -- Groups, Algorithms, and Programming,
Version 4.4}; 2005, \verb+(http://www.gap-system.org)+.

\bibitem{G} Garonzi, M. Finite groups that are the union of at
most $25$ proper subgroups. To appear in \emph{J. Algebra Appl.}



\bibitem{Ma1} Mar\'oti, A. On the orders of primitive groups. \emph{J. Algebra}
\textbf{258} (2002), no. 2, 631-640.

\bibitem{Ma2} Mar\'oti, A. Covering the symmetric groups with proper
subgroups. \emph{J. Combin. Theory Ser. A} \textbf{110} (2005),
no. 1, 97--111.

\bibitem{PS} Praeger, C.; Saxl, J. On the orders of primitive permutation groups.
\emph{Bull. London Math. Soc.} \textbf{12}, (1980), 303-308.

\bibitem{S} Scorza, G. I gruppi che possono pensarsi come somma di tre loro
sottogruppi. \emph{Boll. Un. Mat. Ital.} \textbf{5}, (1926),
216-218.

\bibitem{T} Tomkinson, M. J. Groups as the union of proper subgroups.
\emph{Math. Scand.} \textbf{81}, (1997), 191-198.

\end{thebibliography}
\end{document}